\newcommand*{\fullref}[1]{\hyperref[{#1}]{\ref*{#1}. \nameref*{#1}}} 
\renewcommand{\email}[2][]{%
  \ifx\emails\@empty\relax\else{\g@addto@macro\emails{,\space}}\fi%
  \@ifnotempty{#1}{\g@addto@macro\emails{\textrm{(#1)}\space}}%
  \g@addto@macro\emails{#2}%
}
\definecolor{due}{RGB}{0,76,147}
\theoremstyle{definition}
\newtheorem{defi}{Definition}[section]
\theoremstyle{plain}
\newtheorem{thm}[defi]{Theorem}
\newtheorem{prop}[defi]{Proposition}
\newtheorem{cor}[defi]{Corollary}
\newtheorem{lemma}[defi]{Lemma}
\theoremstyle{remark}
\newtheorem{rmk}[defi]{Remark}
\newtheorem*{notation}{Notation}
\theoremstyle{definition}
\newtheorem*{ack}{Acknowledgement}
\newcommand{\longra}{\longrightarrow}
\newcommand{\kd}{{\mathcal D}}
\newcommand{\ke}{{\mathcal E}}
\newcommand{\kh}{{\mathcal H}}
\newcommand{\kk}{{\mathcal K}}
\newcommand{\kl}{{\mathcal L}}
\newcommand{\km}{{\mathcal M}}
\newcommand{\ko}{{\mathcal O}}
\newcommand{\ks}{{\mathcal S}}
\newcommand{\kx}{{\mathcal X}}
\newcommand{\ky}{{\mathcal Y}}
\newcommand{\IC}{{\mathbb C}}
\newcommand{\IF}{{\mathbb F}}
\newcommand{\IP}{{\mathbb P}}
\newcommand{\Z}{{\mathbb Z}}
\newcommand{\gp}{{\mathfrak p}}
\newcommand{\gX}{{\mathfrak X}}
\newcommand{\rH}{{\rm H}}
\newcommand{\rHet}[1]{{\rm H}_{\rm \acute{e}t}^{#1}}
\newcommand{\Aut}{\operatorname{Aut}}
\newcommand{\Pic}{\operatorname{Pic}}
\newcommand{\NS}{\operatorname{NS}}
\newcommand{\Ext}{\operatorname{Ext}}
\newcommand{\Hom}{\operatorname{Hom}}
\newcommand{\Spec}{\operatorname{Spec}}
\newcommand{\id}{\operatorname{id}}
\newcommand{\ol}{\overline}
\newcommand{\wh}{\widehat}
\newcommand{\Isom}{\operatorname{Isom}}
\newcommand{\xdashrightarrow}[2][]{\ext@arrow 0359\rightarrowfill@@{#1}{#2}}
\newcommand{\xdashleftarrow}[2][]{\ext@arrow 3095\leftarrowfill@@{#1}{#2}}
\newcommand{\xdashleftrightarrow}[2][]{\ext@arrow 3359\leftrightarrowfill@@{#1}{#2}}
\def\rightarrowfill@@{\arrowfill@@\relax\relbar\rightarrow}
\def\leftarrowfill@@{\arrowfill@@\leftarrow\relbar\relax}
\def\leftrightarrowfill@@{\arrowfill@@\leftarrow\relbar\rightarrow}
\def\arrowfill@@#1#2#3#4{%
  $\m@th\thickmuskip0mu\medmuskip\thickmuskip\thinmuskip\thickmuskip
   \relax#4#1
   \xleaders\hbox{$#4#2$}\hfill
   #3$%
}
\begin{document}
	\title[On ordinary Enriques surfaces in positive characteristic]{On ordinary Enriques surfaces in positive characteristic}
	\author{Roberto Laface}
	\address[A1]{Technische Universit\"at M\"unchen, Zentrum Mathematik - M11 Boltzmannstra\ss e 3 85748 Garching bei M\"unchen}
	\email[A1]{laface@ma.tum.de}
	\author{Sofia Tirabassi}
	\address[A2]{Department of Mathematics, Stockholm University, Kr\"aftriket, SE-106 91 Stockholm Sweden\\
	Department of Mathematics, University of Bergen, All\'egatan 41, Bergen, Norway}
	\email[A2]{tirabassi@math.su.se}

\clearpage\maketitle
\thispagestyle{empty}
\begin{abstract}
We give a notion of ordinary Enriques surfaces and their canonical lifts in any positive characteristic, and we prove Torelli-type results for this class of Enriques surfaces.
\end{abstract}


\section{Introduction}
The geometry of surfaces has been a very prolific area of algebraic geometry, since its birth with the Italian school at the end of the XIX century. Today, it is still one of the most active areas of research thanks to its many interdisciplinary connections. 

Among all surfaces, a very well-behaved class of surfaces is that of \textit{K3 surfaces} (for an account over $\IC$, see \cite[Ch.~VII]{barth-et_al04}). The moduli of these surfaces have been extensively studied, one of the cornerstones of this theory being the Torelli theorem for K3 surfaces. 

Let us quickly recall the key points of this important result. If $X/\IC$ is a K3 surface, a \textit{marking} of $X$ is an isometry $\phi: \rH^2(X, \Z) \longra L_\text{K3}$, where $L_\text{K3}=E_8(-1)^{\oplus 2} \oplus U^{\oplus 3}$. The everywhere regular 2-form $\omega_X \in \rH^0(X,\Omega^2_X)$ (i.e.~the \textit{period}) gives via $\phi$ a distinguished 1-dimensional subspace of $L_\text{K3}$, thereby yielding a point in $\IP(L_\text{K3})$. With respect to the cup product on $\rH^2(X,\Z)$, $\omega_X$ satisfies the following \textit{period relations}: $\omega_X.\omega_X = 0$ and $\omega_X . \overline{\omega}_X >0$. This suggests that one should define the \textit{period domain} as 
\[\kd := \lbrace [x] \in \IP(L_\text{K3} \otimes \IC) \, \vert \, x.x=0, \ x.\overline{x}>0 \rbrace.\] 
Each marked K3 surface determines a point in $\kd$, and in fact every point of $\kd$ is attained by some marked K3 surface (a result known as "surjectivity of the period map"). To get rid of the markings, one can quotient $\kd$ by the action of the group of isometries of $L_\text{K3}$, thus obtaining $\kd/\ko(L_\text{K3})$. A K3 surface $X$ now determines a \textit{period point} $\gp(X) \in \kd/\ko(L_\text{K3})$, and the Torelli theorem for K3 surfaces asserts that a K3 surface is indeed determined by its period point (see \cite{barth-et_al04} for a precise statement). 

Closely related to K3 surfaces are Enriques surfaces, which were first discovered by Enriques as an answer to the question of whether a surface with $p_g=q=0$ is necessarily rational. Every complex Enriques surface admits a 2:1 \'etale cover $\pi: X \longra Y$, where $X$ is a K3 surface equipped with a fixed point free involution $\sigma$ (called \textit{Enriques involution}) which is exactly the non-trivial Deck transformation of the covering map $\pi$. For these surfaces, a version of the Torelli theorem holds. 

Let us define the notion of period for an Enriques surface $Y$, $\pi:X \longra Y$ being its K3-cover. Choose an \textit{Enriques marking} of $X$, that is an isometry $\phi: \rH^2(X, \Z) \longra L_\text{K3}$ such that $\phi \circ \sigma^* = \rho \circ \phi$, where $\rho$ is the involution on $L_\text{K3}$ given by $\rho(x,y,z_1,z_2,z_3)=(y,x,-z_1,z_3,z_2)$ (here $x,y \in E_8(-1)$ and $z_1,z_2,z_3 \in U$). The $\rho$-invariant lattice $L^+$ is precisely the Enriques lattice $E_8(2)\oplus U(2)$. The choice of a marking is unique up to an element of the group 
\[ \Gamma := \lbrace g_{\vert L^-} : \, g \in \Aut(L), \, g \circ \rho = \rho \circ g \rbrace, \]
where $L^-$ denotes the anti-invariant lattice of $L$ with respect to $\rho$. Let us define $\kd^-:=\lbrace [x] \in \Omega : \, \rho_{\mathbb C} (x) = -x \rbrace$ to be the period domain of Enriques surfaces. The \textit{period point} of $Y$, $\mathfrak{p}(Y)$ is the image of the period point of $(X,\phi)$ in the quotient space $\kd^- / \Gamma$. The Global Torelli theorem for complex Enriques surfaces states that two Enriques surfaces are isomorphic if and only if they have the same periods.

Moving to positive characteristic, for K3 surfaces we necessarily have to make a distinction depending on whether the height is infinite or not. In the first case, we are dealing with \textit{supersingular K3 surfaces} and a Torelli theorem for such K3 surfaces is available by work of Ogus if the characteristic of the base field is $p \geq 5$. In the latter case, Nygaard proves a Torelli theorem for \textit{ordinary K3 surfaces}, that is K3 surfaces with height $h=1$ (these are general in the moduli space of K3 surfaces).

Naturally, one might wonder whether it is possible to push these results to Enriques surfaces. In this article, we want to study the case of \textit{ordinary Enriques surfaces}, that is Enriques surfaces in positive characteristic whose K3-like cover is a smooth K3 surface of height $h=1$. 

We briefly describe the content of this work. In Section \ref{sec:oes}, we discuss ordinary Enriques surfaces in any positive characteristic and we construct their canonical lifts. This shows that, for an ordinary K3 surface, the property of having an Enriques quotient is retained by its canonical lift (Theorem \ref{canonical}).

In Section \ref{sec:equiniygaard}, we give a version of Nygaard's Torelli theorem that works for (polarized) ordinary Enriques surfaces over finite fields. Although this might have been known to experts, this illustrates some of the techniques that are used later on.

Afterwards, in Section \ref{sec:periods}, we discuss a Torelli theorem for (unpolarized) ordinary Enriques surfaces (Theorem \ref{torelli}) over fields whose ring of Witt vector can be embedded into $\IC$. Our setting includes the case of finite fields as a special instance. The upshot is that an ordinary Enriques surface is determined by the period of its canonical lift.

Finally in Section \ref{sec:equiLenny}, we construct a groupoid of ordinary Enriques surfaces over finite fields and, as in Taelman's work \cite{taelman17}, we construct a fully faithful functor to a suitable category of linear algebra data. This functor turns out to be an equivalence of categories if one assumes a strong form of potential semi-stable reduction (this is condition $(\star)$ in \cite{taelman17}).

\begin{ack}
	We would like to thank Wushi Goldring, Oliver Gregory, Christian Liedtke, Gebhard Martin and Maciej Zdanowicz for their interesting insights and comments. Both authors are also grateful to Lenny Taelman for giving them valuable suggestions of a first draft of the present paper. 	ST was partially supported by the grant 261756 of the Research Councils of Norway. RL is supported by the ERC Consolidator Grant 681838 K3CRYSTAL.
\end{ack}

\begin{notation}
    Throughout the paper, we will be using the following conventions, unless otherwise specified.
    $$
\begin{array}{ll} 
  k & \mbox{ a perfect field of characteristic $p > 0$} \\
  W \equiv W(k) & \mbox{ the ring of Witt vectors over $k$ }\\
  W_n \equiv W_n(k) & \mbox{ the ring of truncated Witt vectors of length $n$ over $k$ }\\
  K & \mbox{ the fraction field of $W$}\\
  \ko_L & \mbox{ the integral closure of $W$ in $L$, $L$ being a finite extension of $K$ }\\
  \ell & \mbox{ the residue field of $\ko_L$ }\\
  \wh{\text{Br}}(\gX) & \mbox{ the formal Brauer group of a K3 surface $\gX$ }\\
  \Psi(\gX) & \mbox{ the enlarged formal Brauer group of a K3 surface $\gX$ }
 \end{array}
$$
To keep the notation consistent, we will  also occasionaly write $\ko_K$ for $W$. We will be using subscripts to indicate base changes, and we will often use rings instead of their spectra in doing so. For example, if $\kx$ is a scheme over $W$, $\kx_K$ will denote its generic fiber and $\kx_{\ko_L}$ its base change to $\ko_L$.
\end{notation}

\section{Ordinary Enriques surfaces and canonical lifts}\label{sec:oes}

Let $Y/k$ an Enriques surface over a perfect field of characteristic $p>0$. If $p$ is odd, then $Y$ admits an \'etale two-to-one cover $\pi : X\rightarrow Y$, where $X$ is K3 surfaces. If $p=2$, then the situation is more complicated (see \cite[Ch.~I, \S.3]{cossec-dolgachev89} for a complete picture) but still there is a degree two finite and flat morphism $\pi : X\rightarrow Y$, where $X$ is a \emph{K3-like} surface, i.e.~a possibly non-normal surface with trivial dualizing sheaf and with the same cohomology of a K3 surface. We are interested in Torelli-type results for Enriques surfaces in positive characteristic. To this end, we first recall that a K3 surface $X$ over a field of positive characteristic is said to be \emph{ordinary}, if it satisfies one of the following equivalent conditions:
\begin{enumerate}[label=(\roman*)]
\item the Hodge and Newton polygons of $\rH^2_{\mathrm{crys}}(X/W)$ coincide;
\item  the Frobenius endomorphism yields a bijection on $\rH^2(X,\mathcal{O}_X)$;
\item the formal Brauer group of $X$ has height $h=1$.
\end{enumerate}

By applying the notion of ordinary K3 surface to the K3-like cover of an Enriques surfaces, we can introduce the following definition:

\begin{defi}[Ordinary Enriques surface]
	An \textit{ordinary Enriques surface} is an Enriques surface whose K3-like cover is an ordinary K3 surface.
\end{defi}

\begin{rmk}
Notice that in characteristic two, an ordinary Enriques surface is precisely a $\mu_2$-Enriques surface, i.e.~an Enriques surface $Y$ such that $\Pic^\tau (Y) \cong \mu_2$. As it turns out, $\mu_2$-Enriques surfaces are usually called "ordinary" (or "singular") in the literature.
\end{rmk}

Let $Y$ be an ordinary Enriques surface over $k$, and let $X$ be its K3-cover. As $X$ is ordinary, we can lift it to $W$. Moreover, among all lifts of $X$ to $W$, the \emph{canonical lift} $\kx_\text{can}/ W$ of $X$ is particularly well-behaved (see \cite{nygaard83tate}). We will be concerned with properties of $X$ that are preserved by passing to its canonical lift $\kx_\text{can}$, and viceversa which properties of $X$ can be read off $\kx_\text{can}$.

As an example, it would be desirable for the canonical lift of an Enriques K3 surface $X$ to retain the property of having an Enriques quotient, and we will show that indeed this is the case. To this end, we begin by discussing a general observation about lifting automorphisms of ordinary K3 surfaces, which has been observed by Kaushal Srivastava \cite[Theorem 4.11]{kaushal18}:

\begin{prop}\label{kaushal}
	Every isomorphism of ordinary K3 surfaces over an perfect field of characteristic $p$ lifts to an isomorphism of their canonical lift.
\end{prop}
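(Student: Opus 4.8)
The plan is to exploit the cohomological characterisation of the canonical lift together with the crystalline deformation theory of K3 surfaces. Recall that for an ordinary K3 surface $X/k$ the F-crystal $\rHcr{2}(X/W)$ is free of rank $22$ and, since the Newton and Hodge polygons coincide, the associated isocrystal splits according to slopes as $\rHcr{2}(X/W)\otimes K = H_0\oplus H_1\oplus H_2$, where $H_i$ is the part of slope $i$ and $H_0,H_2$ have rank $1$. Intersecting these pieces with the lattice produces a Frobenius-stable filtration over $W$, with $\mathrm{Fil}^2_{\mathrm{can}}=\rHcr{2}(X/W)\cap H_2$ and $\mathrm{Fil}^1_{\mathrm{can}}=\rHcr{2}(X/W)\cap(H_1\oplus H_2)$, whose reduction modulo $p$ is the Hodge filtration on $\rHcr{2}(X/k)$. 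The canonical lift $\kx_\text{can}/W$ of \cite{nygaard83tate} is precisely the lift whose Hodge filtration is this canonical, slope-induced one (equivalently, the lift realising the Serre--Tate canonical lift of the enlarged formal Brauer group $\Psi(X)$). I would take this characterisation as the starting point.

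First I would observe that an isomorphism $f\colon X\xrightarrow{\sim} X'$ of ordinary K3 surfaces induces, by functoriality of crystalline cohomology, an isomorphism of F-crystals $f^*\colon \rHcr{2}(X'/W)\xrightarrow{\sim}\rHcr{2}(X/W)$ compatible with the Frobenius and with the cup-product pairing. Since $f^*$ commutes with Frobenius it must preserve the slope decomposition, so it carries $H_i'$ isomorphically onto $H_i$ for each $i$; in particular it maps $\mathrm{Fil}^j_{\mathrm{can}}$ for $X'$ onto $\mathrm{Fil}^j_{\mathrm{can}}$ for $X$. Thus $f^*$ is an isomorphism of \emph{filtered} F-crystals between the data defining $\kx'_\text{can}$ and those defining $\kx_\text{can}$.

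The core of the argument is then to upgrade this cohomological compatibility to an actual lift of $f$. Here I would invoke the crystalline deformation theory of K3 surfaces (Deligne, Ogus): deformations of a K3 surface over $W_n$ are classified by the isotropic lifts of the line $\mathrm{Fil}^2\subset \rHcr{2}(X/W_n)$, and, more to the point, a morphism of special fibres lifts to a morphism between two chosen lifts precisely when the induced map on crystalline cohomology respects their Hodge filtrations. Running this criterion inductively via Grothendieck--Messing, the compatibility established above shows that $f$ lifts to an isomorphism $f_n\colon (\kx_\text{can})_n\xrightarrow{\sim}(\kx'_\text{can})_n$ over each $W_n$, compatibly in $n$. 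Finally, since the canonical lifts are projective, the resulting morphism of formal schemes algebraises by Grothendieck existence to the desired isomorphism $\kx_\text{can}\xrightarrow{\sim}\kx'_\text{can}$ over $W$, whose special fibre is $f$.

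I expect the main obstacle to be the deformation-theoretic lifting criterion for morphisms: one has to check that the obstruction to lifting $f$ from $W_n$ to $W_{n+1}$ is governed exactly by the discrepancy between $f^*(\mathrm{Fil}_{\kx'_\text{can}})$ and $\mathrm{Fil}_{\kx_\text{can}}$, and that the isotropy conditions imposed by the pairing are respected throughout, so that the inductive step never obstructs. Everything else—the slope decomposition, the preservation of filtrations, and the final algebraisation—is comparatively formal. An alternative, and perhaps cleaner, route to the same conclusion is to argue entirely on the enlarged formal Brauer groups: $f$ induces an isomorphism $\Psi(X)\xrightarrow{\sim}\Psi(X')$ of ordinary $p$-divisible groups, which lifts uniquely to the Serre--Tate canonical lifts by functoriality of the latter, and one then transports this back to the K3 surfaces.
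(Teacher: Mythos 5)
Your proposal is correct in outline but takes a genuinely different route from the paper, and the difference is worth spelling out. You characterise the canonical lift by the property that its Hodge filtration on $\rH^2_{\mathrm{dR}}(\kx_\text{can}/W)\cong\rHcr{2}(X/W)$ is the one induced by the slope decomposition, note that $f^*$ preserves this filtration because it commutes with Frobenius, and then invoke a morphism-lifting criterion from crystalline deformation theory to lift $f$ between the two \emph{pre-specified} lifts. That criterion --- a map of special fibres lifts to a map between two given lifts if and only if $f^*_{\mathrm{crys}}$ respects their Hodge filtrations --- is the entire content of your argument, and you black-box it. It is a theorem (Ogus, Deligne--Illusie), but the standard references prove it under restrictions on the characteristic (Ogus works with $p\geq 3$, often $p\geq 5$), and Grothendieck--Messing theory, which you invoke by name, is delicate at $p=2$; since the proposition is asserted for every $p>0$, and $p=2$ is precisely the case the paper needs for Enriques surfaces, your argument as written does not obviously cover all cases. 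The paper sidesteps the morphism-lifting criterion entirely: it transports the formal canonical lift $\gX$ of $X$ along $\phi$ to get a formal lift $\gX'$ of $X'$ \emph{together with} a lifted isomorphism for free (this is just $\mathrm{Def}_{\phi}\cong\mathrm{Def}_X$), algebraizes using an ample class, and then checks that the transported lift is the canonical lift of $X'$ by functoriality of the enlarged formal Brauer group: $\phi^*$ identifies the connected--\'etale extension of $\Psi(\gX')$ with that of $\Psi(\gX)$, so one splits if and only if the other does. Your ``alternative cleaner route'' via Serre--Tate canonical lifts of $\Psi$ is the closest in spirit to this, but as stated it still conflates lifting the map on $p$-divisible groups with lifting the map on surfaces; to transport it back you need Nygaard's identification of $\widehat{\mathrm{Def}(X)}$ with $\Ext^1(\Psi(X)_\text{can}^\text{\'et},\wh{\mathrm{Br}}(X)_\text{can})$ together with the free transport of the deformation along $\phi$ --- at which point you have essentially reproduced the paper's proof. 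What your route buys, when it applies, is an explicit cohomological criterion that works for any pair of lifts, not just canonical ones; what the paper's route buys is uniform validity in all characteristics with only Nygaard's deformation theorem as input.
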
 

In \cite{kaushal18}, the author makes restrictions on the ground field (which is assumed to be algebraically closed and of odd characteristic) and the proof makes strong use of derived categories and Taelman's result \cite[Theorem C]{taelman17}. Here, we would like to propose a different approach which uses the canonicity of the canonical lift and deformation theory.

To this end, let us briefly recall the main property of the canonical lift of an ordinary K3 surface that we will be using in the following, and we refer to \cite[\S 1]{taelman17} for a concise but exhaustive introduction. 

Let $k$ be a perfect field of characteristic $p>0$ and let $\Lambda$ being a complete local noetherian ring with residue field $k$. If $X/k$ is an ordinary K3 surface and $\gX/\Lambda$ is a formal lift of $X$, then the \'etale-local exact sequence of $\Psi(\gX)$ looks like
\[ 0 \longra \Psi(X)_\text{can}^\circ \longra \Psi(\gX) \longra \Psi(X)_\text{can}^\text{\'et} \longra 0,\]
where $\Psi(X)_\text{can}^\circ$ is the unique lift to $\Lambda$ of the unit component $\Psi(X)^\circ$ of $\Psi(X)$, and $\Psi(X)_\text{can}^\text{\'et}$ is the unique lift to $\Lambda$ of the \'etale part $\Psi(X)^\text{can}$ of $\Psi(X)$. In fact, by \cite{artin-mazur77}, $\Psi(X)^\circ \cong \wh{\text{Br}}(X)$ canonically.

It follows that we have a map
\[ \wh{\text{Def}(X)}(\Lambda) \longra \Ext^1_\Lambda(\Psi(X)_\text{can}^\text{\'et},\wh{\text{Br}}(X)_\text{can})\]
sending a formal lift $\gX$ to the extension defined above. By \cite[Theorem 1.6]{nygaard83tate}, this map is an isomorphism. Therefore, one defines the canonical lift of $X$ over $\Lambda$ to be unique lift for which the above extension splits.

\begin{proof}
Let $X,X'$ be two ordinary K3 surfaces over $k$, and let $\phi: X \longra X'$ be an isomorphism. Consider the canonical lift $\kx_\text{can}/ W$ of $X$, and let $X_n/W_n$ to be the restriction of $\kx_{\text{can}}$ over $W_n$.  Thus we can think of $\kx_{\text{can}}$ as the algebraization of the formal scheme $\gX:=\{X_n/W_n\}$. 
	
	By \cite[Lemma 85.10.6]{stacks-project}  there is an isomorphism of infinitesimal deformation functors $\text{Def}_{\phi:X\rightarrow X'}\longrightarrow \text{Def}_X$, thus for every $n$ there exists a deformation $X'_n$ over $W_n$ and  a morphism $\phi_n: X_n \longra X_n'$ over $W_n$, which specializes to $\phi$. As being an isomorphism is an open condition we have that $\phi_n$ is still an isomorphism. Thus we get  an isomorphism of formal schemes $\wh{\phi}: \gX \longra \gX'$.  
	
	We want to show that $\gX'$ is algebraizable. To this aim let $A$ an ample line bundle on $X$. Since $\kx_\text{can}$ is a lift preserving the Picard group, we can consider the completion $\wh{A}$ of $A$ on $\gX:=\wh{\kx_\text{can}}$ . As $\wh{\phi}$ is an isomorphism we have that $\wh{A}=\wh{\Phi}^*\wh{M}$ for some line bundle $\wh M$ on $\gX'$.  The restriction $M$ of $\wh{M}$ to $X'$ pulls back to $A$ which is ample, thus $M$ it is ample itself. By Grothendieck's algebraization theorem, we obtain a scheme $\kx' / W$ and an isomorphism $\Phi: \kx_\text{can} \longra \kx'$ specializing to $\phi$. 
	
	Notice that $\kx'$ is a priori not the canonical lift of $X$\footnote{In fact, we could use at this point Taelman's result \cite[Theorem C]{taelman17} to immediately conclude that $\kx'=\kx'_\text{can}$, but we should look for a more natural/functorial approach.}. However, since $\wh{\text{Br}}$ and $\Psi$ are functorial (they are defined in terms of cohomology), we obtain the following diagram.
	
	\[
	\xymatrix{
	0 \ar[r] & \Psi(X)_\text{can}^\circ \ar[r] & \Psi(\gX)  \ar[r] & \Psi(X)_\text{can}^\text{\'et} \ar[r] & 0\\
		0 \ar[r] & \Psi(X')_\text{can}^\circ \ar[r] \ar[u]_{\phi^*} & \Psi(\gX') \ar[r] \ar[u]_{\phi^*} & \Psi(X')_\text{can}^\text{\'et} \ar[r] \ar[u]_{\phi^*} & 0
	}
	\]
	Therefore, the bottom extension splits if and only if the above one does, and we are done by the definition of canonical lift.
\end{proof}

Now, we apply this result to show that the property of having an Enriques quotient is transfered from an ordinary K3 surface to its canonical lift.

\begin{thm}\label{canonical}
	Let $X$ be an ordinary K3 surface over a perfect field $k$ that admits an Enriques quotient $Y$. Then, its canonical lift also admits an Enriques quotient, which is a lift of $Y$ over $W(k)$.	
\end{thm}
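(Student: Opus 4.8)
The plan is to realize the Enriques quotient through the covering involution, lift that involution to $\kx_\text{can}$ by means of Proposition \ref{kaushal}, check that the lifted automorphism is again a \emph{free} involution, and finally descend along it. Assume first that $p$ is odd, so that $\pi\colon X\to Y$ is étale and its nontrivial Deck transformation is a fixed-point-free involution $\sigma\in\Aut(X)$ with $Y=X/\langle\sigma\rangle$. Applying Proposition \ref{kaushal} to the isomorphism $\sigma\colon X\to X$ of ordinary K3 surfaces produces an automorphism $\wt\sigma\colon\kx_\text{can}\to\kx_\text{can}$ over $W$ specializing to $\sigma$.

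The first thing to pin down is that $\wt\sigma$ is an involution. This follows from the \emph{uniqueness} of the lift built in the proof of Proposition \ref{kaushal}: since $\text{Def}_{\sigma\colon X\to X}\to\text{Def}_X$ is an isomorphism of functors, each $\wt\sigma$ is the unique deformation of $\sigma$ compatible with $\kx_\text{can}$, and this construction is functorial in the isomorphism. Consequently $\wt\sigma^{\,2}$ and $\id$ are both lifts of $\sigma^2=\id$, whence $\wt\sigma^{\,2}=\id$.

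Next I would show that $\wt\sigma$ is fixed-point-free. Its fixed locus $Z\subset\kx_\text{can}$ is the preimage of the diagonal under $(\id,\wt\sigma)\colon\kx_\text{can}\to\kx_\text{can}\times_W\kx_\text{can}$, hence closed, and proper over $W$ as a closed subscheme of the proper $W$-scheme $\kx_\text{can}$. Its closed fiber is $X^\sigma=\varnothing$ because $\sigma$ is free. The image of $Z$ in $\Spec W$ is therefore a closed subset of the spectrum of a complete discrete valuation ring that avoids the closed point; as every nonempty closed subset of $\Spec W$ contains the closed point, we conclude $Z=\varnothing$. With $\wt\sigma$ a free involution and $2\in W^\times$, the quotient $\ky:=\kx_\text{can}/\langle\wt\sigma\rangle$ exists as a projective $W$-scheme — one uses the $\wt\sigma$-invariant ample bundle $\ka\otimes\wt\sigma^{*}\ka$, for $\ka$ ample, so that every orbit lies in an affine open — and $\kx_\text{can}\to\ky$ is finite étale of degree $2$. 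Thus $\ky$ is smooth and proper over $W$, its closed fiber is $X/\langle\sigma\rangle=Y$, and its geometric generic fiber is a K3 surface modulo a free involution, i.e.\ an Enriques surface. Hence $\ky$ is the desired Enriques quotient of $\kx_\text{can}$ lifting $Y$.

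The main obstacle is characteristic $2$, where an ordinary Enriques surface is a $\mu_2$-Enriques surface and the covering $X\to Y$ is no longer governed by an honest involution but by a free action of the infinitesimal group scheme $\mu_2\cong\Pic^\tau(Y)$ (equivalently, by the $2$-torsion invertible sheaf defining the torsor). Here Proposition \ref{kaushal}, which lifts genuine isomorphisms, does not apply verbatim. The approach I would take is to lift the torsor structure directly: exploiting that $\kx_\text{can}$ preserves the Picard group and that $\mu_2$ deforms flatly over $W$, one lifts the $2$-torsion line bundle and the degree-two algebra structure defining $\pi$, producing a $\mu_2$-action on $\kx_\text{can}$ whose quotient will be the lift of $Y$. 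The technical crux — the analogue in this characteristic of the properness argument above — is to verify that the lifted infinitesimal action remains free, so that the quotient is again smooth over $W$ with special fiber $Y$.
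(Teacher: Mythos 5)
Your argument for odd characteristic is correct and follows the same overall strategy as the paper's proof: lift $\sigma$ via Proposition \ref{kaushal}, check the lift is a free involution, and take the quotient. Two steps are handled differently. For the order of $\wt\sigma$, you invoke uniqueness of lifts of automorphisms (the real input being $\rH^0(X,T_X)=0$), whereas the paper uses the Lieblich--Maulik injectivity of the specialization map $\Aut(\kx_{\text{can},\ol K})\to\Aut(X_{\ol k})$; both work. For fixed-point-freeness, your argument (the fixed locus is closed in a proper $W$-scheme, so its image in $\Spec W$ is closed, yet misses the closed point, hence is empty) is cleaner and more self-contained than the paper's contradiction via $L$-valued points and the valuative criterion. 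You also spell out the quotient construction, which the paper leaves implicit.

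The genuine gap is characteristic $2$, which you explicitly leave unfinished --- and the detour you sketch there rests on a misreading of what ``ordinary'' means. As noted in the paper's remark after the definition, an ordinary Enriques surface in characteristic $2$ is precisely a $\mu_2$-Enriques surface, i.e.\ $\Pic^\tau(Y)\cong\mu_2$; the canonical cover is then a torsor under the Cartier dual $(\mu_2)^D\cong\Z/2\Z$, hence is \emph{\'etale} of degree $2$ with a genuine fixed-point-free Deck involution. (It is the \emph{classical}, $\Z/2\Z$-Enriques surfaces whose cover is a $\mu_2$-torsor, and those are excluded by ordinariness.) So there is no infinitesimal group action to lift: your odd-characteristic argument applies verbatim in characteristic $2$. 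The only adjustment needed is to drop the appeal to $2\in W^\times$ in the quotient step, which is in any case unnecessary --- the quotient of a quasi-projective scheme by a free action of the constant group $\Z/2\Z$ exists and the quotient map is finite \'etale in every characteristic (freeness, not invertibility of the order, is what makes it \'etale). Once you make this observation, your proof is complete and covers the full statement.
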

	
\begin{proof}
	We will prove that if $\sigma$ is an Enriques involution on $X$, then its lift to its canonical lift $\kx_\text{can}$ is also an Enriques involution. Certainly, by Proposition \ref{kaushal}, $\sigma$ lifts to an automorphism $\Sigma : \kx_\text{can} \longra \kx_\text{can}$. Therefore, we need to prove that $\Sigma$ is an involution and that it has (geometrically) no fixed points. 
	
	By an argument of Lieblich and Maulik \cite[Proof Theorem 2.1]{lieblich-maulik11}, we have that the specialization map $\text{sp}:\mathrm{Aut}(\kx_\text{can,$\ol{K}$})\rightarrow \mathrm{Aut}(X_{\ol{k}})$ is injective. In particular $\sigma$ and $\Sigma$ have the same order, hence $\Sigma$ is an involution.
	
	Finally, suppose that $\Sigma$ fixes a geometric fixed point $\overline{s}:\mathrm{Spec}(\Omega)\rightarrow \kx_{\text{can}}$, $\Omega$ being an algebraically closed field. Then, by choosing an embedding $K\hookrightarrow \Omega$, there is a finite extension $L$ of $K$ and an $L$-valued point $s:\mathrm{Spec}(L)\rightarrow \kx_{\text{can}}$ which is fixed by $\Sigma$. In turn this will yield, by the valuative criterion for properness a fixed $\ko_L$-valued point. Specializing this provides a fixed geometric point for $\sigma$, giving a contradiction.
\end{proof}

In light of the above result, the following definition makes sense.

\begin{defi}[Canonical lift of an Enriques surface]
	Let $Y$ be an ordinary Enriques surface over $k$, with K3 cover $X$ and Enriques involution $\sigma$. The \emph{canonical lift} of $Y$ is the Enriques quotient $\ky_\text{can}/ W$ of the canonical lift $\kx_\text{can}$ of $X$ by the (unique) lift $\Sigma$ of $\sigma$ to $ W$.
\end{defi}

\subsection{On the number of Enriques quotients of an ordinary K3 surface}

In this paragraph, we take a small detour and use Nygaard's theory of canonical lift for ordinary K3 surfaces to count the number of their Enriques quotients. In order to do that we have to fix an embedding $\iota:W\hookrightarrow \mathbb{C}$, thus we restrict ourselves to perfect fields whose cardinality is at most that of  $\mathbb{C}$. Given a K3 surface $X$ over $k$, we will denote by $X_{\mathrm{can}}^\iota$ the complex K3 obtained from $\mathcal{X}_{\mathrm{can}}$ by base changing through $\iota$. In this notation we have the following statement.
\begin{prop}\label{prop: Enriques}
Let $X$ be an ordinary K3 surface over $k$, $\text{char}(k) \neq 2$. The number of Enriques quotients of $X_{{k}}$ is equal the number of Enriques quotients of its canonical lift $X^\iota_{\mathrm{can}}$
\end{prop}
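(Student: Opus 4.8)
The plan is to establish a bijection between the set of Enriques quotients of $X_k$ and the set of Enriques quotients of $X^\iota_{\mathrm{can}}$ by passing through the intermediate step of the canonical lift $\kx_\text{can}/W$. Since an Enriques quotient of a K3 surface is the same datum as an Enriques involution (a fixed-point-free involution whose quotient has trivial fundamental group and the correct numerical invariants), it suffices to produce a bijection between the sets of Enriques involutions on $X_k$ and on $X^\iota_{\mathrm{can}}$ (modulo the obvious identification of two quotients when the involutions agree). First I would observe that Theorem~\ref{canonical}, together with the injectivity of the specialization map $\text{sp}:\Aut(\kx_{\text{can},\ol K})\to\Aut(X_{\ol k})$ used in its proof, gives an injection $\sigma\mapsto\Sigma$ from Enriques involutions on $X$ to Enriques involutions on $\kx_\text{can}$; by Proposition~\ref{kaushal} every automorphism of $X$ lifts, so this map is in fact a bijection onto the set of involutions $\Sigma$ of $\kx_\text{can}$ whose special fiber is an Enriques involution.

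Next I would pass from the integral lift to the complex surface via $\iota$. Base change along $W\hookrightarrow\IC$ gives a homomorphism $\Aut(\kx_\text{can})\to\Aut(X^\iota_\text{can})$, and an involution $\Sigma$ on $\kx_\text{can}$ produces an involution $\Sigma^\iota$ on $X^\iota_\text{can}$. The key point is that $\Sigma$ is an Enriques involution (fixed-point-free with Enriques quotient) if and only if $\Sigma^\iota$ is: the quotient $\ky_\text{can}=\kx_\text{can}/\Sigma$ is a smooth proper scheme over $W$ whose special fiber is the Enriques surface $Y$, and whose base change along $\iota$ is the quotient $X^\iota_\text{can}/\Sigma^\iota$; freeness of the action and the numerical invariants of an Enriques surface are stable under the flat base change $\iota$. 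Conversely, every automorphism of $X^\iota_\text{can}$ is already defined over $\kx_\text{can}$, because for an ordinary K3 surface the canonical lift has the same Picard lattice as the special fiber and hence the same (finite) automorphism group; this is exactly the content one extracts from Taelman's description, and it is what forces the map $\Aut(\kx_\text{can})\to\Aut(X^\iota_\text{can})$ to be a bijection rather than a mere injection.

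The hard part will be proving surjectivity, i.e.~that \emph{every} Enriques involution of the complex surface $X^\iota_\text{can}$ descends to $\kx_\text{can}$ and ultimately to $X_k$. Over $\IC$ one knows that automorphisms of a K3 surface are detected by their action on the Picard lattice together with the period, so the argument should run: an Enriques involution $\tau$ on $X^\iota_\text{can}$ acts on $\NS(X^\iota_\text{can})$, and because $\kx_\text{can}$ preserves the Picard group of $X$ (the defining property of the canonical lift), this lattice-level automorphism is compatible with the one coming from the special fiber; invoking the Torelli theorem over $\IC$ and the injectivity of specialization one identifies $\tau$ with some $\Sigma^\iota$, hence with a unique $\sigma$ on $X_k$. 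The delicate verification is that the three incarnations of the automorphism group ($X_k$, $\kx_\text{can}$, $X^\iota_\text{can}$) are canonically identified, which rests on the equality of Néron--Severi lattices across the canonical lift and the flatness of $\iota$; once this identification is in place, the bijection on involutions restricts to a bijection on \emph{Enriques} involutions by the base-change stability discussed above, and the count of quotients follows.
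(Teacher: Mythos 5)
Your overall strategy---lifting involutions from $X_k$ to $\kx_\text{can}$ and then comparing with $X^\iota_{\mathrm{can}}$---matches the paper's in one direction, but there is a genuine gap in the descent (surjectivity) direction, which is where essentially all the work in the paper's proof lies. Given an Enriques involution $\tau$ on the complex surface $X^\iota_{\mathrm{can}}$, you can spread it out over the ring of integers $\ko_L$ of a finite extension $L/K$ and specialize to an involution $\sigma$ of $X_{\ol k}$ (Matsusaka--Mumford), but nothing in your argument shows that $\sigma$ is still \emph{fixed-point-free}: fixed points can appear upon specialization to the closed fiber, and passing from the generic to the special fiber is not a base change under which freeness is automatically preserved. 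Your assertion that $\ky_\text{can}=\kx_\text{can}/\Sigma$ is a smooth proper scheme over $W$ with Enriques special fiber presupposes exactly the freeness on the special fiber that has to be proved. The paper handles this with a genuinely geometric argument (following Jang): since $\sigma^*$ acts as $-1$ on $\rH^0(X,\omega_X)$, a fixed point would force the local representation of $\sigma$ to be $\mathrm{diag}(1,-1)$ --- this is where $\mathrm{char}(k)\neq 2$ enters, and your proposal never uses the characteristic hypothesis, which is a warning sign --- hence a curve of fixed points and a smooth quotient $Y$ with $\Pic(Y)\cong E_{10}$; the classification of surfaces then rules out every possibility for such a $Y$ (not minimal of Kodaira dimension zero, not ruled because $\rho(Y)=10$). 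Without a substitute for this step the claimed bijection is not established.

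Two further inaccuracies in the middle of your argument: the automorphism group of a K3 surface is in general infinite, and ``same N\'eron--Severi lattice'' does not by itself yield ``same automorphism group'' --- one needs a Torelli-type statement together with control of the period and of the ample cone. This is precisely why the paper routes the comparison through Ohashi's lattice-theoretic classification instead: Enriques involutions on the complex K3 surface $X^\iota_{\mathrm{can}}$ correspond bijectively to primitive embeddings $E_{10}\hookrightarrow\NS(X^\iota_{\mathrm{can}})$ whose orthogonal complement contains no $(-2)$-vector and whose image contains an ample class. Recasting the count in terms of such lattice data is what makes the comparison between the special fiber and the complex fiber tractable, and it is the ingredient your proposal would need to import (or replace) to close the gap.
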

\begin{proof}
We are going to show that there is a one-to-one correspondence between involutions of $X$ which do not fix any geometric point and the primitive embeddings of the lattice ${E}_{10}:=H(2)\oplus E_8(2)$ into $NS(\mathcal{X}^\iota_{\mathrm{can}})$ such that
\begin{itemize}
    \item[(A)] The orthogonal of $E_{10}$ has no  vector with square equal -2
    \item[(B)] $E_{10}$ contains an ample divisor.
\end{itemize}
The result will follow from the work of Ohashi \cite{Oha2007}, who proves a similar result over $\IC$. Let $\sigma$ an involution of $X_{\overline{k}}$ without fixed geometric points. By the results of the first section we can extend it to a free fixed point involution of $\mathcal{X_\mathrm{can}}/ W$ which in turns specializes to a free fixed point involution of $X^\iota_\mathrm{can}$. By \cite[Prop. 2.2]{Oha2007}, there is a uniquely determined primitive embedding of ${E}_{10}$ into $\NS(X^\iota_\mathrm{can})$  satisfying the condition (A) and (B) above. \par
Conversely if we have such a primitive embedding, then we get (in a unique way) an Enriques involution $\Sigma$ on $X^\iota_\mathrm{can}$. We will follow the proof of \cite[Theorem 2.5]{Jang2015} to show that this specializes to an Enriques involution on $X$. We remark that $\Sigma$ is defined over  a finite (intermediate) extension $L$ of $K$. Then $\Sigma$ extends to an involution $\overline{\Sigma}$ of $\mathcal{X}_{\mathcal{O}_L}$. Denote by $\sigma$ the specialization of $\overline{\Sigma}$ to the closed fiber. By Matsusaka--Mumford this is an involution of $X$. We see that $\sigma^*$ acts as multiplication by $-1$ on $\rH^0(X,\omega_X)$ as $\overline{\Sigma}$ does the same. We now claim that $\sigma$ is an Enriques involution. Indeed, assuming the existence of fixed points, \'etale locally around a fixed point\footnote{Here is where we need odd characteristic.} the representation of $\sigma= \Sigma \vert_{X_{\bar{k}}}$ is given by the matrix $\begin{pmatrix} 1 & 0 \\ 0 & -1\end{pmatrix}$. Therefore, there is a curve of fixed points and thus the quotient surface $Y:=X / \sigma$ is smooth. Since the characteristic is odd, the surface $Y$ can have Kodaira dimension at most zero. Also, $Y$ must be minimal (there are no (-1)-curves) since $\Pic(Y) = E_{10}$ is an even lattice. By a case-by-case analysis, $Y$ is not isomorphic to any of the minimal surfaces with vanishing Kodaira dimension, thus it must be ruled. However, the Picard lattice of minimal ruled surfaces has rank at most two, contradicting $\rho(Y)=10$. This shows that $\sigma$ is an Enriques involution, and we are done.
\end{proof}
\begin{rmk}\label{rmk: Enriques}
The proof of the above statement shows how, over finite fields, one can count the number of Enriques quotients of an ordinary K3 surface from the linear algebra data provided by Taelman in \cite{taelman17}. More precisely, to any ordinary K3 surface over a finite field $k$, we can associate a triplet $(M, F, K)$, where $M:= \rH^2(\mathcal{X}_{\mathrm{can}}^\iota, \mathbb{Z})$, $F:M\rightarrow M$ is a lift of Frobenius and $K$ is the ample cone in $\NS(\mathcal{X}_{\mathrm{can}}^\iota)$. The proof of \ref{prop: Enriques} shows that The number of Enriques quotients of $X$ is equal to the number of primitive embeddings of the lattice Enriques lattice $E_{10}=E_8(2)\oplus H(2)$ into $\NS(\mathcal{X}^\iota_{\mathrm{can}})$ such that the conditions (A) and (B) above hold.
\end{rmk}

\section{Torelli theorem for polarized Enriques surfaces over finite fields}\label{sec:equiniygaard}
In this paragraph, we study the Torelli problem for ordinary Enriques surfaces in positive characteristic by using the result of the previous section and Nygaard's Torelli theorem for polarized K3 surfaces over finite fields (see \cite{nygaard83}).

 We will work over $\mathbb{F}_q$ ($q = p^r$, $r>0$). After fixing an embedding $\iota:W\rightarrow \mathbb{C}$, we can associate to any ordinary K3 surface $X/\IF_q$ a complex K3 surface, $X_\mathrm{can}^\iota$ by base-changing the canonical lift $\mathcal{X}_\mathrm{can}\rightarrow W$ along $\iota$. In addition, there is an endomorphism $F$ of $\rH^2(X^\iota_\mathrm{can},\mathbb{Z}[\frac{1}{p}])$ such that for every prime $\ell \neq p$ the canonical comparison isomorphism $\rH^2(X_\text{can}^\iota,\Z_\ell) \longra \rHet{2}(X_{\ol{\IF}_q}, \Z_\ell)$ matches $F$ with (the pull-back of) the geometric Frobenius $F_X$ on \'etale cohomology. In this setting Nygaard's Torelli theorem can be stated as follows:

\begin{thm}[Nygaard]\label{Nygaard}
	Let $(X_1,\km_1)$ and $(X_2,\km_2)$ be two ordinary K3 surfaces over $\IF_q$. Assume that:
		\begin{enumerate}
			\item there exists an isometry $\phi : \rH^2(X_{2,\mathrm{can}}^\iota,\Z) \longra \rH^2(X_{1,\mathrm{can}}^\iota,\Z)$ that is compatible with the polarizations;
			\item there exists a positive integer $m > 0$ such that $\phi \circ F_{X_1}^m = F_{X_2}^m \circ \phi$.
		\end{enumerate}		
	Then, $(X_1,\km_1) \cong (X_2,\km_2)$ over $\overline{\IF}_q$.
\end{thm}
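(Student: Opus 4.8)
The plan is to bootstrap from the classical Torelli theorem over $\IC$ by working on the canonical lifts, the crucial bridge being that the Frobenius-compatibility in hypothesis (2) is exactly what promotes the lattice isometry $\phi$ to a \emph{Hodge} isometry of the complex K3 surfaces $X_{1,\mathrm{can}}^\iota$ and $X_{2,\mathrm{can}}^\iota$. Once $\phi$ is known to be a polarized Hodge isometry, the strong Torelli theorem for complex K3 surfaces produces an isomorphism over $\IC$, which one then spreads out over a suitable $\ko_L$ and specializes to $\ol{\IF}_q$, exactly as in the fixed-point analysis of Theorem \ref{canonical}.

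The heart of the matter — and the step I expect to be the main obstacle — is the passage from Frobenius-compatibility to Hodge-compatibility. Here I would invoke the defining property of the canonical lift recalled before Proposition \ref{kaushal}: because $X_i$ is ordinary, its cohomology carries a unit-root (slope) decomposition, Newton and Hodge polygons agreeing, and the canonical lift is precisely the lift for which the splitting $0 \longra \Psi(X_i)_{\mathrm{can}}^\circ \longra \Psi(\gX) \longra \Psi(X_i)_{\mathrm{can}}^{\text{\'et}} \longra 0$ holds; equivalently, by Nygaard, the Hodge filtration on $\rH^2(X_{i,\mathrm{can}}^\iota,\IC)$ is the one intrinsically cut out by the $F$-action on $\rH^2(X_{i,\mathrm{can}}^\iota,\Z[\tfrac{1}{p}])$ (the line $\rH^{2,0}$ being the slope-$2$ part, and $\rH^{2,0}\oplus\rH^{1,1}$ the non-unit-root part). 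Since passing from $F$ to $F^m$ multiplies slopes by $m$ without altering the associated filtration, the intertwining $\phi\circ F_{X_1}^m = F_{X_2}^m\circ\phi$ forces $\phi$ to carry the unit-root/slope decomposition of $X_1$ to that of $X_2$, hence to respect the Hodge filtrations. Thus $\phi$ is a Hodge isometry.

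With $\phi$ a Hodge isometry, hypothesis (1) is used to guarantee \emph{effectivity}: the compatibility with polarizations means $\phi$ sends the ample class $\km_2$ to $\km_1$, so $\phi$ maps the positive cone to the positive cone and, carrying the $(-2)$-walls to $(-2)$-walls, sends the K\"ahler chamber of $X_{2,\mathrm{can}}^\iota$ containing $\km_2$ onto the K\"ahler cone of $X_{1,\mathrm{can}}^\iota$. The strong Torelli theorem for complex K3 surfaces then yields a \emph{unique} isomorphism $f: X_{1,\mathrm{can}}^\iota \longra X_{2,\mathrm{can}}^\iota$ with $f^* = \phi$, and this $f$ automatically respects the polarizations.

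It remains to descend $f$ to the special fibre. Since the $\kx_{i,\mathrm{can}}$ are defined over $W$ and $f$ is an algebraic isomorphism, $f$ is defined over some finite extension $L$ of $K$; arguing as in the proof of Theorem \ref{canonical} via the valuative criterion of properness, $f$ extends to an isomorphism $\kx_{1,\mathrm{can},\ko_L} \longra \kx_{2,\mathrm{can},\ko_L}$, whose reduction is an isomorphism of the closed fibres over the residue field $\ell \subset \ol{\IF}_q$, that is, $X_1 \cong X_2$ over $\ol{\IF}_q$. Because the canonical lift preserves the Picard group, the specialization still matches $\km_1$ with $\km_2$, delivering the polarized isomorphism $(X_1,\km_1)\cong(X_2,\km_2)$; the injectivity of the specialization map on automorphisms (following Lieblich--Maulik, as used in Theorem \ref{canonical}) ensures that nothing is lost in this final step.
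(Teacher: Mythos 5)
The paper does not prove this statement: it is quoted as Nygaard's theorem from \cite{nygaard83}. Your reconstruction is nonetheless essentially Nygaard's actual argument, and it is the same machinery the paper itself deploys in the proof of Theorem \ref{polarized}: the Hodge filtration of the canonical lift is cut out by the slope (unit-root) decomposition of Frobenius, so $F^m$-equivariance promotes $\phi$ to a Hodge isometry, the strong Torelli theorem over $\IC$ produces the isomorphism, and one then descends to the special fibre. The only imprecision is in that last step: extending the isomorphism of generic fibres over $\ko_L$ is not an application of the valuative criterion of properness (which is what Theorem \ref{canonical} uses to extend a fixed \emph{point}); the correct tool is the Matsusaka--Mumford theorem for polarized non-ruled varieties, which is precisely where hypothesis (1) and the polarizations are needed.
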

The results of the previous section give us a theory of canonical lift for ordinary Enriques surface. Therefore, to any ordinary Enriques surface $Y$ over $k$ we can associate a complex Enriques surface $Y_\mathrm{can}^\iota$ by base-changing its canonical lift $\mathcal{Y}_\mathrm{can}\rightarrow W$. In this notation we get the following Torelli-type theorem for polarized ordinary Enriques surfaces, whose proof paves the way towards our results in the rest of this paper.

\begin{thm}\label{polarized}
	Let $(Y_1,\kl_1)$ and $(Y_2,\kl_2)$ be two ordinary Enriques surfaces over $\IF_q$. Denote by $(X_1,\km_1)$ and $(X_2,\km_2)$  respectively their  K3 covers  equipped with the pull-back polarization. Assume that:
		\begin{enumerate}
		\item there exists an isometry $\phi : \rH^2(X_{2,\mathrm{can}}^\iota,\Z) \longra \rH^2(X_{1,\mathrm{can}}^\iota,\Z)$ that is compatible with the polarizations;
			\item there exists a positive integer $m > 0$ such that $\phi \circ F_{X_1}^m = F_{X_2}^m \circ \phi$.
			\item the isometry $\phi$ is compatible with the Enriques involutions on the canonical lifts, that is, $\Sigma_{1,\IC}^* \circ \phi = \phi \circ \Sigma_{2,\IC}^*$, $\Sigma_1$ and $\Sigma_2$ being lifts of the Enriques involution to the canonical lifts.
		\end{enumerate}		
	Then, $(Y_1,\kl_1) \cong (Y_2,\kl_2)$ over $\overline{\IF}_q$.
\end{thm}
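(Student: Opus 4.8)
The plan is to reduce the statement to an \emph{equivariant} isomorphism of the K3 covers: conditions (1) and (2) are exactly the hypotheses of Nygaard's Theorem \ref{Nygaard}, and I would run that argument to produce an isomorphism of the covers, then use condition (3) to force it to intertwine the Enriques involutions, so that it descends to the quotients. Concretely, the proof of Theorem \ref{Nygaard} factors through the complex Torelli theorem for K3 surfaces: from (1) and (2) it manufactures a unique isomorphism $g \colon X_{1,\mathrm{can}}^\iota \longra X_{2,\mathrm{can}}^\iota$ of polarized complex K3 surfaces with $g^* = \phi$ (the passage to a power $F^m$ only affects the field of definition of the eventual descent and not the fact that $\phi$ is a polarized Hodge isometry, since $F$ and $F^m$ share the same eigenspaces, which is what cuts out the Hodge filtration of the canonical lift). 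I would extract precisely this $g$ from the proof, since it is the object on which hypothesis (3) can be tested.

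Next I would promote $g$ to an equivariant map using the uniqueness clause of the strong Torelli theorem. Computing the induced maps on cohomology, the two isomorphisms $g \circ \Sigma_{1,\IC}$ and $\Sigma_{2,\IC} \circ g$ from $X_{1,\mathrm{can}}^\iota$ to $X_{2,\mathrm{can}}^\iota$ induce $\Sigma_{1,\IC}^* \circ \phi$ and $\phi \circ \Sigma_{2,\IC}^*$ on $\rH^2$, respectively. Hypothesis (3) asserts exactly that these coincide; since an automorphism of a complex K3 surface acting trivially on $\rH^2$ is the identity, the two isomorphisms agree, i.e.\ $g \circ \Sigma_{1,\IC} = \Sigma_{2,\IC} \circ g$. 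Thus $g$ is equivariant for the Enriques involutions on the canonical lifts.

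It then remains to descend and to pass to the quotient. As in Nygaard's theorem, the isomorphism of canonical lifts underlying $g$ specializes to an isomorphism $\overline g \colon X_1 \longra X_2$ over $\overline{\IF}_q$; and because specialization of morphisms is injective — the Lieblich--Maulik input already used in Theorem \ref{canonical} — the relation $g \circ \Sigma_{1,\IC} = \Sigma_{2,\IC} \circ g$ specializes to $\overline g \circ \sigma_1 = \sigma_2 \circ \overline g$, where $\sigma_i$ is the Enriques involution on $X_i$. An equivariant isomorphism of the covers descends to an isomorphism $\overline h \colon Y_1 \longra Y_2$ of Enriques surfaces over $\overline{\IF}_q$, compatible with the covering maps $\pi_i \colon X_i \longra Y_i$. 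Finally, $\overline g$ is a polarized isomorphism by condition (1), and $\km_i = \pi_i^* \kl_i$, so the chain $\pi_1^* \overline h^* \kl_2 = \overline g^* \pi_2^* \kl_2 = \overline g^* \km_2 = \km_1 = \pi_1^* \kl_1$, together with the injectivity of $\pi_1^*$ on $\NS$ modulo torsion, yields $\overline h^* \kl_2 = \kl_1$; hence $(Y_1,\kl_1) \cong (Y_2,\kl_2)$ over $\overline{\IF}_q$.

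The main obstacle is the equivariance step: one must be sure that the complex isomorphism produced by Nygaard's theorem is genuinely the one inducing $\phi$, so that condition (3) applies to it, and that equivariance survives both the Betti/\'etale comparison and the specialization to positive characteristic. The uniqueness in the strong Torelli theorem is the mechanism that makes both points work, and the injectivity of specialization of morphisms transports the relation faithfully. In characteristic two there is the additional subtlety that the cover is only K3-like, so one should check that specialization of morphisms and the descent to the quotient remain valid in that setting, or else restrict to odd characteristic as in Proposition \ref{prop: Enriques}.
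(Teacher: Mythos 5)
Your proposal is correct and follows essentially the same route as the paper: extract the complex polarized isomorphism $g$ with $g^*=\phi$ from the proof of Nygaard's theorem, use the strong Torelli theorem together with hypothesis (3) to make $g$ equivariant for the Enriques involutions, extend to the $\ko_L$-models and specialize (the Lieblich--Maulik/Matsusaka--Mumford step), and descend to the quotients. The only cosmetic difference is that the paper phrases the transport of the equivariance relation via the bijectivity of the restriction map $\Aut(\kx_{i,\ko_L})\to\Aut(\kx_{i,L})$ rather than ``injectivity of specialization,'' and your explicit check that the descended isomorphism matches the polarizations $\kl_i$ is a detail the paper leaves implicit.
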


\begin{proof}
By Theorem \ref{Nygaard} we have that the polarized K3 surfaces $(X_1, M_1)$ and $(X_2, M_2)$ are isomorphic over $\overline{\IF}_q$. However, by looking at Nygaard proof of Theorem \ref{Nygaard}, one sees that the isomorphism above comes from an isomorphism of integral models. 

More precisely,  let $\mathcal{M}_1$ and $\mathcal{M}_2$ be the lift of $M_1$ and $M_2$ to $\kx_{1,\mathrm{can}}$ and $\kx_{2,\mathrm{can}}$, respectively, and denote by $M_i^\iota$ the restriction of $\mathcal{M}_i$ to $X_{i\:\mathrm{can}}^\iota$.  By the Torelli theorem for complex K3 surfaces, there exists $\psi : (X^\iota_{1,\mathrm{can}},M^\iota_1) \longra (X^\iota_{2,\mathrm{can}}, M^\iota_2)$ inducing $\phi$ (that is, $\psi^* = \phi$). The isomorphism $\psi$ is defined over a finite extension $L$ of $K = \text{Frac}(W)$. If we denote by $\ko_L$ the normalization of $W$ in $L$, we obtain two polarized $\ko_L$-models $\mathcal{X}_{1,\ko_L}:=(\kx_{1,\mathrm{can}},\km_1) \times_W \ko_L$ and $\mathcal{X}_{2,\ko_L}:=(\kx_{2,\mathrm{can}},\km_2)\times_W \ko_L$ whose generic fibers are isomorphic as polarized varieties. The Matsusaka-Mumford criterion \cite{matsusaka-mumford64} then implies that the isomorphism $\Psi_L$ on the generic fiber extends to one on the $\ko_L$-models, say $\Psi_{\ko_L}$, and therefore to one on the central fibers, say $\Psi_\ell$.

	By the condition on the isometry $\phi$ being compatible with the Enriques involutions, we can conclude by the argument above that $\Psi_L \circ \Sigma_L = \Sigma'_L \circ \Psi_L$. We claim that the same holds over $\ko_L$, that is we have the following commutative diagram.
\[
\xymatrix{
\kx_{1,\ko_L} \ar[r]^{\Psi_{\ko_L}} \ar[d]_{\Sigma_{1,\ko_L}} & \kx_{2,\ko_L} \ar[d]^{\Sigma_{2,\ko_L}} \\
 \kx_{1,\ko_L} \ar[r]_{\Psi_{\ko_L}} & \kx_{2,\ko_L}
}
\]
By \cite[Theorem 2.1]{lieblich-maulik11} and the construction of a lift in Proposition \ref{kaushal}, the restriction morphism
\[ \Aut(\kx_{i,\ko_L}) \longra \Aut(\kx_{i,L}), \qquad \Phi \longmapsto \Phi_\eta \]
is an isomorphism (here $\eta$ is the generic point of $\Spec \ko_L$). This implies that also the morphism 
\[ \Isom(\kx_{1,\ko_L},\kx_{2, \ko_L}) \longra \Isom(\kx_{1,L},\kx_{2,L})\]
is an isomorphism, showing that $\Psi_{L}$ lifts to a unique $\Psi_{\ko_L}$. Therefore, as $\Psi_L^{-1} \circ \Sigma_{2,L} \circ \Psi_L \circ \Sigma_{1,L}= \id_{\kx_{1,L}}$, it follows that $\Psi_{\ko_L}^{-1} \circ \Sigma_{2,\ko_L} \circ \Psi_{\ko_L} \circ \Sigma_{1,\ko_L}= \id_{\kx_{1,\ko_L}}$ as claimed.

By passing to the central fibers, this implies that $\Psi_\ell \circ \sigma_1 = \sigma_2 \circ \Psi_\ell$. This means that the isomorphism is compatible with the Enriques involutions also on the special fibers, and thus that the Enriques surfaces $Y$ and $Y'$ are isomorphic over $\ell$, which is a finite extension of $k$.
\end{proof}

\section{Torelli theorem for unpolarized Enriques surfaces}\label{sec:periods}
In this section we aim at proving a Global Torelli Theorem for (unpolarized) ordinary Enriques surfaces over a perfect field $k$. In order to run our argument we will need to fix an embedding $\iota:W\rightarrow \mathbb{C}$, thus we can only consider fields $k$ whose cardinality is at most that of $\mathbb{C}$. Our strategy consists in considering the canonical lift of a given ordinary Enriques surface, and base change via $\iota$ to $\IC$, where we know the result holds true. For sake of readability, we will not keep track of $\iota$ in our notation, although it will always be understood that one such embedding of $W$ into $\IC$ is fixed.

Enriques surfaces have a particularly well-behaved deformation theory for line bundles: as $\rH^2(\ko_Y) = \rH^1(\ko_Y)=0$, there is no obstruction to lifting a line bundle and the lift is unique. Therefore, for a lift $\ky/W$ of $Y$, $\Pic(Y) \cong \Pic(\ky)$. Since we ultimately want to use the Matsusaka-Mumford criterion, we will need to extend ample line bundles on the generic fiber $\ky_K$ to the integral model $\ky/W$. 

However, although an ample line bundle on $\ky_K$ does indeed extend to a line bundle on $\ky$, it does not necessarily extend to an ample line bundle. Indeed, this can occur whenever the central fiber $Y$ of $\ky$ has additional $(-2)$-curves with respect to the generic fiber. Let us now see when we can overcome this difficulty.

\begin{lemma}\label{technical}
 	Let $Y/k$ be an Enriques surface, let $\ky/W$ be a lift of $Y$ to the Witt vectors and let $\kl_K$ be an ample line bundle on the generic fiber $\ky_K$.
 	\begin{enumerate}
 		\item If $Y$ is an unnodal Enriques, then $\kl_K$ extends to an ample line bundle $\kl$ on $\ky$, thus to an ample line bundle $\kl_k$ on $Y$.
 		\item Let $\kx$ be the K3 cover of $\ky$, and assume $X:= \kx \times_\ky Y$ is the K3-cover\footnote{Therefore, if $p=2$, we are again assuming that $Y$ is an ordinary Enriques surface.} of $Y$. If $\kx$ is a N\'eron-Severi lift of $X$, then $\kl_K$ extends to an ample line bundle $\kl$ on $\ky$, thus to an ample line bundle $\kl_k$ on $Y$.
	\end{enumerate} 	 
\end{lemma}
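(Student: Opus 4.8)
The plan is to extend $\kl_K$ to a line bundle $\kl$ on $\ky$ first, and then to determine ampleness by analyzing how the intersection-theoretic positivity could fail on the central fiber. Since $\rH^1(\ko_Y)=\rH^2(\ko_Y)=0$, the specialization map $\Pic(\ky)\to\Pic(Y)$ is an isomorphism and the restriction $\Pic(\ky)\to\Pic(\ky_K)$ is injective with torsion-free cokernel, so $\kl_K$ extends uniquely to some $\kl$ on $\ky$; the only question is ampleness of the central fiber restriction $\kl_k$. By the Nakai--Moishezon criterion, after twisting so that $\kl_k$ is at least big and nef (its self-intersection is positive and is preserved under specialization), the failure of ampleness is detected precisely by the $(-2)$-curves $C$ on $Y$ with $\kl_k.C\le 0$. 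Because $\kl_K$ is ample, any such $C$ cannot be the specialization of a curve class already positive on the generic fiber; it must be a \emph{new} $(-2)$-curve appearing only on $Y$. This is the mechanism flagged in the paragraph preceding the lemma, and it is the sole obstruction to overcome.

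For part (1), if $Y$ is \emph{unnodal} then by definition $Y$ carries no $(-2)$-curves at all, so there is no class $C$ with $C^2=-2$ that could violate $\kl_k.C>0$. First I would argue that nefness persists: the closure of the nef cone is cut out by the (finitely many, here empty) set of $(-2)$-curves together with the positive cone, so $\kl_k$ lands in the ample cone. More precisely, $\kl_k$ is big with $\kl_k^2>0$, and since $\Pic(Y)=\NS(Y)$ has no $(-2)$-classes representing effective curves obstructing positivity, Nakai--Moishezon gives ampleness directly. Hence $\kl_K$ extends to an ample $\kl$, and restricting to the central fiber yields the ample $\kl_k$.

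For part (2) the hypothesis is that $\kx$ is a \emph{N\'eron--Severi lift} of the K3 cover $X$, i.e.\ the specialization $\NS(\kx_K)\to\NS(X)$ is an isomorphism rather than a strict inclusion. The strategy is to transfer the ampleness question from $Y$ up to its K3 cover $X$ via the finite flat degree-two map $\pi\colon X\to Y$, where $\pi^*\kl_k$ is ample iff $\kl_k$ is (by finiteness of $\pi$). Any new $(-2)$-curve on $Y$ obstructing $\kl_k$ would pull back to a curve class on $X$ that is $(-2)$ on the K3 cover and obstructs $\pi^*\kl_k$; but such a class would be a genuinely new element of $\NS(X)$ not present on the generic fiber $\kx_K$ (where $\pi_K^*\kl_K$ is ample), contradicting the assumption that $\kx$ is a N\'eron--Severi lift, i.e.\ that $\NS$ does not jump under specialization. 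I would then invoke the $\sigma$-equivariance of the setup to descend the ampleness conclusion from $X$ back to $Y$. The main obstacle I expect is the compatibility of the two Picard/N\'eron--Severi specialization maps under $\pi$ together with the possible subtleties in characteristic $2$, where $\pi$ is only finite flat and $X$ is K3-like; this is precisely why the statement restricts part (2) to the ordinary case and demands the N\'eron--Severi lift hypothesis, which is exactly what rules out the appearance of obstructing $(-2)$-classes on the special fiber.
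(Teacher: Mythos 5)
Your plan is correct and follows essentially the same route as the paper: part (1) is exactly the ``unnodal means no $(-2)$-curves, hence no obstruction to Nakai--Moishezon'' observation, and part (2) rests on the same key point, namely that the N\'eron--Severi lift hypothesis on the K3 cover prevents new obstructing $(-2)$-classes from appearing on the special fiber, combined with transferring ampleness along the finite degree-two cover (the paper simply cites \cite[Proposition 2.4]{lieblich-maulik11} for the step you re-derive, and runs the argument by extending the pulled-back bundle on $\kx$ and descending, rather than extending on $\ky$ and pulling up to check). One small imprecision: a $(-2)$-curve $C$ on $Y$ pulls back to a class with $(\pi^*C)^2=-4$ whose two components are $(-2)$-curves on $X$, not to a single $(-2)$-class; this does not affect the argument.
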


\begin{proof}
The first statement follows from the discussion above. As for the second claim, consider the following commutative diagram. 
\[
\xymatrix{
X \ar[r] \ar[d]_{\pi} & \kx_\text{can} \ar[d]^{\Pi} & \kx_\text{can,$K$} \ar[l] \ar[d]^{\Pi_K} \\
Y \ar[r] & \ky & \ky_\text{can,$K$} \ar[l]
}
\]
Let $\sigma$ (respectively $\Sigma$, $\Sigma_K$) be the Enriques involution on $X$ (respectively $\kx_\text{can}$, $\kx_\text{can,$K$}$). Then, $\km_K := \Pi_K^*\kl_K$ is an ample line bundle on $\kx_\text{can,$K$}$ which is also $\Sigma_K$-invariant. Under the isomorphism $\Pic(\kx_\text{can}) \longra \Pic(\kx_\text{can,$K$})$ (or rather its inverse), $\km_K$ extends to a line bundle $\km$ on $\kx_\text{can}$, which is $\Sigma$-invariant and ample by \cite[Proposition 2.4]{lieblich-maulik11}. Therefore, it specializes to an ample $\sigma$-invariant line bundle $\km_k$ on X, which descends to a line bundle $\kl_k$ on $Y$. By construction, this is the specialization of $\kl_K$, and we are done.
\end{proof}

We will prove a Torelli type theorem for Enriques surfaces in positive characteristic that satisfy the conditions of Lemma \ref{technical}.

\begin{thm}\label{torelli}
	Let $Y_1$ and $Y_2$ be two Enriques surfaces satisfying the conditions of Lemma \ref{technical}, and let $\ky_1$ and $\ky_2$ be their respective lifts to $W$. If $\gp(\ky_{1,\IC}) = \gp(\ky_{2,\IC})$, then $Y_1 \cong Y_2$ after possibly performing a finite extension of the ground field.
\end{thm}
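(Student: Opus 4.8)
The plan is to reduce the positive-characteristic statement to the complex Global Torelli theorem for Enriques surfaces applied to the canonical lifts, and then to descend the resulting isomorphism back down to the special fibers via the Matsusaka--Mumford criterion, exactly in the spirit of Theorem \ref{polarized}. First I would observe that the hypothesis $\gp(\ky_{1,\IC}) = \gp(\ky_{2,\IC})$ means the complex Enriques surfaces $\ky_{1,\IC}$ and $\ky_{2,\IC}$ have the same period point, so by the complex Global Torelli theorem there is an isomorphism $\psi_\IC : \ky_{1,\IC} \longra \ky_{2,\IC}$. Lifting $\psi_\IC$ through the K3-covers gives an isomorphism of the covering complex K3 surfaces $\kx_{1,\IC} \longra \kx_{2,\IC}$ that is equivariant for the Enriques involutions $\Sigma_{i,\IC}$.

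The next step is to spread out this complex isomorphism to an isomorphism defined over a finite extension $L$ of $K$. Since each $\ky_{i,\IC}$ is obtained from the $W$-model $\ky_{i,\mathrm{can}}$ (equivalently $\ky_i$) by base change along the fixed embedding $\iota : W \hookrightarrow \IC$, and since isomorphisms of projective schemes are defined over a finitely generated field of definition, the isomorphism $\psi_\IC$ descends to an isomorphism $\psi_L : \ky_{1,L} \longra \ky_{2,L}$ of the generic fibers over $L$. Here is where the conditions of Lemma \ref{technical} become essential: I would fix an ample line bundle $\kl_{1,K}$ on $\ky_{1,K}$ and transport it via $\psi_L$ to an ample bundle on $\ky_{2,L}$; Lemma \ref{technical} guarantees that both extend to \emph{ample} line bundles on the integral models $\ky_1$ and $\ky_2$ (this is precisely the role of the unnodal/N\'eron--Severi-lift hypothesis, since otherwise extra $(-2)$-curves on the special fiber could destroy ampleness). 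Thus $\psi_L$ is an isomorphism of \emph{polarized} generic fibers.

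Having arranged a polarized isomorphism on the generic fibers, I would invoke the Matsusaka--Mumford criterion \cite{matsusaka-mumford64}: passing to the ring of integers $\ko_L$ of $L$, the two polarized $\ko_L$-models $\ky_{1,\ko_L}$ and $\ky_{2,\ko_L}$ have isomorphic generic fibers, so the isomorphism extends over $\ko_L$ and hence specializes to an isomorphism $\psi_\ell : Y_{1,\ell} \longra Y_{2,\ell}$ of the central fibers, where $\ell$ is the (finite) residue field of $\ko_L$. This yields $Y_1 \cong Y_2$ after the finite extension $\ell/k$, as asserted. The main obstacle is the extension-of-ampleness step controlled by Lemma \ref{technical}: the naive extension of an ample bundle across the integral model need only be ample when the special fiber acquires no new $(-2)$-curves, which is exactly why the theorem is stated for the restricted class of Enriques surfaces satisfying that lemma. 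A secondary technical point to handle with care is the compatibility of the descent with the Enriques involutions — one wants the whole diagram of K3-covers, involutions, and polarizations to descend coherently — but this is governed by the same Lieblich--Maulik rigidity (\cite[Theorem 2.1]{lieblich-maulik11}) used in the proof of Theorem \ref{polarized}, ensuring that automorphisms and isomorphisms over $L$ lift uniquely to $\ko_L$ and specialize correctly.
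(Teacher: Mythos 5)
Your proposal is correct and follows essentially the same route as the paper: reduce to the complex Global Torelli theorem via the lift to $W$ and the embedding $\iota$, use Lemma \ref{technical} to extend the polarization to an ample bundle on the $\ko_L$-models, and conclude with Matsusaka--Mumford. The only presentational difference is that you invoke the complex Torelli theorem for Enriques surfaces as a black box, whereas the paper unpacks it through the K3-covers (producing a Hodge isometry from the markings, adjusting by the Weyl group to make it effective and involution-equivariant, and descending the resulting K3 isomorphism to the quotients); both are valid.
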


\begin{proof}
	The proof is a combination of the Torelli theorem for Enriques surfaces over $\mathbb C$ and the Matsusaka-Mumford criterion.

If the two Enriques surfaces have the same period points, it follows that there exist special markings $\phi_1: \rH^2(\kx_{1,\mathbb C},\Z) \longra L$ and $\phi_2: \rH^2(\kx_{2,\mathbb C},\Z) \longra L$ on the corresponding K3 covers such that the composite
	\[ \Phi:= \phi_1^{-1} \circ \phi_2 : \rH^2(\kx_{2,\mathbb C},\Z)  \longra \rH^2(\kx_{1,\mathbb C},\Z)\]
	is a Hodge isometry. 
	
	If $\Sigma_1$ and $\Sigma_2$ denote the Enriques involutions on $\kx_{1,\IC}$ and $\kx_{2,\IC}$, respectively, fix a $\Sigma_2^*$-invariant ample polarization $\km_{2,\IC}$ on $\kx_{2,\mathbb C}$ and consider the class $\km_{1,\IC}:=\Phi(\km_{2,\IC})$ on $\kx_{1,\mathbb C}$. Up to the action of an element of the Weyl group, we can assume that $\km_{1,\IC}$ is ample and $\Sigma_1^*$-invariant on $\kx_{1,\mathbb C}$, and thus that $\Phi$ is an effective Hodge isometry. By the Torelli theorem for complex K3 surfaces, there exists $g: \kx_{1,\mathbb C} \longra \kx_{2,\mathbb C}$ such that $g^*= \Phi$. In particular, $g$ is an isomorphism of polarized K3 surfaces.
	
	Now, $(g \circ \Sigma_1 \circ g^{-1} \circ \Sigma_2)^*$ is the identity on $\rH^2(\kx_{2,\mathbb C},\Z)$, hence $g \circ \Sigma_1 \circ g^{-1} \circ \Sigma_2=\text{id}_{\kx_{2,\mathbb C}}$. This implies that $g$ descends to an isomorphism $f_\IC: \ky_{1,\mathbb C} \longra \ky_{2,\mathbb C}$ of the Enriques quotients. Notice that $f_\IC$ is an isomorphism of the polarized Enriques surfaces $(\ky_{1,\mathbb C}, {\kl}_{1,\IC})$ and $(\ky_{2,\mathbb C}, {\kl}_{2,\IC})$, $\kl_{1,\IC}$ and $\kl_{2,\IC}$ being the quotient polarization: indeed, the polarizations on the K3 surfaces descend to the Enriques quotients, as they are invariant under the corresponding Enriques involutions.
	
	Naturally, everything is defined over a finite extension $L/K$. Let $\ko_L$ be the integral closure of $W$ in $L$, and let $\ell$ be its residue field, which is a finite extension of $k$. The $\Sigma_{1,L}^*$-invariant polarization $\km_{1,L}$ extends (uniquely) to a line bundle $\km_{1,\ko_L}$ on $\kx_{1,\ko_L}$, which is $\Sigma_1^*$-invariant: this follows from the isomorphism $\Pic(\kx_{1,\ko_L}) \cong \Pic(\kx_{1,L})$ and the commutative diagram below.
\[
\xymatrix{
\kx_{1,L} \ar[r]^{\Sigma_{1,L}} \ar[d] & \kx_{1,L} \ar[d] \\
 \kx_{1,\ko_L} \ar[r]_{\Sigma_{1,\ko_L}} & \kx_{1,\ko_L}
}
\]
By Lemma \ref{technical} $\km_ {1,\ko_L}$ is ample, thus it descends to an ample line bundle $\kl_{1,\ko_L}$ on $\ky_{1,\ko_L}$, and in turn this yields a polarization on $Y_{1,\ell}$. The same considerations can be made for $Y_2$.

We are now in the hypothesis of the Matsusaka-Mumford theorem: we conclude that the isomorphism of polarized pairs $f_L: (\ky_{1,L},\kl_{1,L}) \longra (\ky_{2,L}, \kl_{2,L})$ extends to an isomorphism of $\ko_L$-models $f_{\ko_L}: \ky_{1,\ko_L} \longra \ky_{2,\ko_L}$, yielding an isomorphism $f_\ell : Y_{1,\ell} \longra Y_{2,\ell}$ upon passing to the central fibers.
\end{proof}

As a corollary, we see that the isomorphism class of an ordinary Enriques surface is determined by that of its canonical lift, as it happens for K3 surfaces.

\begin{cor}
	Let $Y_1$ and $Y_2$ be ordinary Enriques surfaces over $k$. Then, $Y_\text{$1$,can} \cong Y_\text{$2$,can}$ if and only if $(\ky_{1,\text{can}})_{\bar{K}} \cong (\ky_{2,\text{can}})_{\bar{K}}$.
\end{cor}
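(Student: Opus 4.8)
The plan is to prove the two implications separately, reading the equivalence as relating an isomorphism of the ordinary Enriques surfaces $Y_1,Y_2$ (after a finite extension of $k$) to an isomorphism of the geometric generic fibers $(\ky_{1,\text{can}})_{\bar K}$ and $(\ky_{2,\text{can}})_{\bar K}$ of their canonical lifts. Throughout I would use that the canonical lift preserves the Picard group, so that the K3 cover of $\ky_{i,\text{can}}$ is a N\'eron--Severi lift of the K3 cover $X_i$ of $Y_i$; hence condition (2) of Lemma \ref{technical} holds for the pair $(Y_i,\ky_{i,\text{can}})$ and Theorem \ref{torelli} becomes applicable.

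For the implication $(\ky_{1,\text{can}})_{\bar K}\cong(\ky_{2,\text{can}})_{\bar K}\Rightarrow Y_1\cong Y_2$, I would extend the fixed embedding $\iota\colon W\hookrightarrow\IC$ to an embedding $\bar K\hookrightarrow\IC$ and base change the given isomorphism, obtaining an isomorphism of complex Enriques surfaces $(\ky_{1,\text{can}})_\IC\cong(\ky_{2,\text{can}})_\IC$. By the Global Torelli theorem for complex Enriques surfaces recalled in the introduction, isomorphic complex Enriques surfaces have the same period, so $\gp((\ky_{1,\text{can}})_\IC)=\gp((\ky_{2,\text{can}})_\IC)$. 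Applying Theorem \ref{torelli} with $\ky_i=\ky_{i,\text{can}}$ then yields $Y_1\cong Y_2$ after a finite extension of the ground field.

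For the converse, an isomorphism $Y_1\cong Y_2$ (defined over a finite extension) lifts to an isomorphism $\phi\colon X_1\to X_2$ of the K3 covers compatible with the Enriques involutions, $\phi\circ\sigma_1=\sigma_2\circ\phi$. By Proposition \ref{kaushal}, $\phi$ lifts to an isomorphism $\Phi\colon\kx_{1,\text{can}}\to\kx_{2,\text{can}}$. To descend $\Phi$ to the Enriques quotients I must check that it intertwines the lifted involutions $\Sigma_1,\Sigma_2$: both $\Phi\circ\Sigma_1\circ\Phi^{-1}$ and $\Sigma_2$ are automorphisms of $\kx_{2,\text{can}}$ specializing to $\phi\circ\sigma_1\circ\phi^{-1}=\sigma_2$, so they agree by the injectivity of the specialization map $\Aut((\kx_{2,\text{can}})_{\bar K})\to\Aut(X_{2,\bar k})$ of Lieblich--Maulik \cite[Theorem 2.1]{lieblich-maulik11}. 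Thus $\Phi$ descends to an isomorphism $\ky_{1,\text{can}}\to\ky_{2,\text{can}}$, and passing to geometric generic fibers gives $(\ky_{1,\text{can}})_{\bar K}\cong(\ky_{2,\text{can}})_{\bar K}$. If one instead reads the left-hand side as an isomorphism of the complex canonical lifts, the same argument applies after the harmless base change along $\bar K\hookrightarrow\IC$, together with the fact that the $\Isom$-scheme is of finite type over the algebraically closed field $\bar K$.

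The main obstacle is the first implication: one has to verify that the canonical lifts genuinely satisfy the hypotheses of Lemma \ref{technical} (hence of Theorem \ref{torelli}), and to pass cleanly between isomorphisms over $\bar K$ and over $\IC$ so that the complex Global Torelli theorem can be invoked to extract equality of period points. The converse is essentially formal, combining the lifting statement of Proposition \ref{kaushal} with the injectivity of specialization to obtain equivariance and descent.
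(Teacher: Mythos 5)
Your proposal is correct and follows essentially the same route as the paper: the ``if'' direction is Theorem \ref{torelli} (you just spell out the intermediate steps the paper leaves implicit, namely choosing an embedding $\bar K\hookrightarrow\IC$ extending $\iota$ to pass from an isomorphism over $\bar K$ to equality of period points, and checking via preservation of the Picard group that the canonical lifts satisfy Lemma \ref{technical}), while the ``only if'' direction lifts the isomorphism to the K3 covers, then to the canonical lifts via Proposition \ref{kaushal}, and uses injectivity of specialization to get equivariance and descend to the Enriques quotients. No gaps; your version is simply more detailed than the paper's.
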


\begin{proof}
	The "if" follows from Theorem \ref{torelli}. As to the "only if" implication, we can assume that the isomorphism in the statement is defined over $k$. Let $\phi: Y_1 \longra Y_2$ be this isomorphism, and let $\psi: X_1 \longra X_2$ be its lift to the K3-covers. Then, $\psi$ commutes with the Enriques involutions, and so does its lift $\Psi: \kx_\text{$1$,can} \longra \kx_\text{$2$,can}$ to the canonical lifts. Therefore, $\Psi$ descends to an isomorphism $\Phi: \ky_\text{$1$,can} \longra \ky_\text{$2$,can}$.
\end{proof}

\section{Ordinary Enriques surfaces over finite fields \`a la Taelman}\label{sec:equiLenny}

We now  want to apply the results of the previous paragraph to give a description of  Enriques surfaces over finite fields by using linear algebra data in a similar way to what Taelman does in \cite{taelman17} for K3 surfaces. Let $Y$ be an Enriques surface over a finite field $\IF_q$, and choose, like before, an embedding $i:W\rightarrow \mathbb{C}$. Let again $\mathcal{Y}_\text{can}^\iota$  be the complex Enriques surfaces obtained from $\mathcal{Y}_\text{can}$ via extension of scalars through $\iota$.

We now introduce the following two groupoids: 
$$
\begin{array}{ll} 
    \underline{\ke n}^\text{ord} & \mbox{ the groupoid of ordinary Enriques surfaces, whose objects are ordinary } \\
    & \mbox{ Enriques surfaces over $\IF_q$ and whose arrows are isomorphisms between them;} \\
    & \\
    \underline{\kk 3}^\text{ord} & \mbox{ the groupoid of ordinary K3 surfaces, whose objects are ordinary K3 surfaces }\\
    & \mbox{ over $\IF_q$ and whose arrows are isomorphisms between them.}
\end{array}
$$

In order to associate to an ordinary Enriques surface over a finite field some linear algebra data, we need to first go through the groupoid of ordinary K3 surfaces, which we know maps fully faithfully onto a suitable groupoid of linear algebra structures by \cite{taelman17}. Naively, one is let to define a functor $\underline{\ke n}^\text{ord} \longra \underline{\kk 3}^\text{ord}$ by sending an Enriques surface to its K3-cover. Unfortunately, this functor is not going to be fully faithful, and consequently neither will its composition with a suitable functor $\underline{\kk 3}^\text{ord} \longra \underline{\kh\ks}$, where here $ \underline{\kh\ks}$ is a suitable category of linear algebra data ("HS" stands for "Hodge Structures"). Indeed, not every isomorphism of K3 surfaces induces an isomorphism of the corresponding Enriques quotients. 

Moreover, any isomorphism $\phi: Y \longra Y'$ between two Enriques surfaces lifts to an isomorphism $\widehat{\phi}: X \longra X'$ of the corresponding K3-covers which is compatible with both Enriques involutions. However, $\phi$ is also induced by the isomorphisms $\phi \circ \sigma$ and $\sigma' \circ \phi$, $\sigma$ and $\sigma'$ being the Enriques involutions on $X$ and $X'$, so that $F$ is still not fully faithful.

Nevertheless, observe that given an isomorphism $\phi: Y \longra Y'$ of Enriques surfaces, its lift $\widehat{\phi}: X \longra X'$ to the K3-covers is unique up to twisting by the Enriques involutions. This suggest to consider the following groupoid:
$$
\begin{array}{ll} 
    \underline{\ke n \kk 3}^\text{ord} & \mbox{ the groupoid of pairs $(X,\sigma)$, where $X$ is an ordinary K3 surface over $\IF_q$, }\\
    & \mbox{ $\sigma$ is an Enriques involution, and whose arrows are non-ordered pairs}\\
    & \mbox{ $(f, f \circ \sigma)$, where $f$ is an isomorphism compatible with both involutions.}
\end{array}
$$

Notice that the compatibility condition says that $(f, f \circ \sigma)=(f, \sigma' \circ f)$, if $f:X \longra X'$ is an isomorphism as above. Also, given two composable pairs of arrows $(f,f \circ \sigma):(X,\sigma) \longra (X',\sigma')$ and $(g,g \circ \sigma'): X' \longra X''$, the composition $(g,g\circ \sigma') \circ (f,f \circ \sigma)$ is obtained as follows: consider all four possible compositions 
\[ g \circ f, \quad g \circ f \circ \sigma, \quad  g \circ \sigma' \circ f \quad \text{and} \quad  g \circ \sigma' \circ f \circ \sigma,\]
and notice that the compatibility conditions yields a unique pair of arrows $(g \circ f, g \circ f \circ \sigma): (X,\sigma) \longra (X'', \sigma'')$.

Now, we observe that the lift $\Sigma_K$ of the Enriques involution $\sigma$ on $X/\IF_q$ to the canonical lift $\kx_\text{can,K}$ commutes with the lift $F_{\kx_\text{can,K}}$ of the Frobenius endomorphism $F_X$ on $X$. This seems known to experts, however we could not find a source so we decide to include a short argument for the reader convenience. To this aim, note first that, as $X$ is defined over a finite field, the absolute Frobenius is a morphism of $\IF_q$-varieties and thus it commutes with $\sigma$, i.e.~$\sigma \circ F_X = F_X \circ \sigma$. By \cite{lieblich-maulik11}, the specialization map $\text{sp}: \Aut(\kx_{\text{can,K}}) \longra \Aut(X_{\ol{k}})$ is injective, whence $ \Sigma_K \circ F_{\kx_{\text{can,K}}}= F_{\kx_{\text{can,K}}} \circ \Sigma_K$.

In light of this, we can define a groupoid $\underline{\kh\ks}$ of linear algebra data, which will be our target category. Its objects are the quadruplets $(M,F,K,i)$ where:

\begin{itemize}
    \item the triple $(M,F,K)$ is as in \cite{taelman17};
    \item $i$ is an involution of the lattice $M$, such that 
            \begin{itemize}
                \item $i \circ F = F \circ i$;
                \item $\text{Fix}(i) \cong E_{10}$;
                \item $\text{Fix}(i) \subset \NS(M,F)$ and this inclusion is a primitive embedding;
                \item there is no element $\delta \in \NS(M,F)$, $\delta^2=-2$, orthogonal to $\text{Fix}(i)$;
                \item $(K \cap \NS(M,F)) \cap \text{Fix}(i) \neq \emptyset$, i.e.~$\text{Fix}(i)$ contains an integral ample class.
            \end{itemize}
\end{itemize}
The morphisms in $\underline{\kh\ks}$ are non-ordered pairs $(f,i \circ f)$ of morphisms as in \cite{taelman17} such that $f$ is compatible with both involutions. We have now two functors
\[ F: \underline{\ke n}^\text{ord} \longra \underline{\ke n \kk 3}^\text{ord} \qquad \text{and} \qquad G: \underline{\ke n \kk 3}^\text{ord} \longra \underline{\kh\ks},\]
which are defined as follows: $F$ sends an Enriques surface $Y$ to the pair $(X,\sigma)$ consisting of its K3-cover together with the corresponding Enriques involution, and $G$ sends $(X,\sigma)$ to the quadruple
\[ \big( \rH^2(X_\text{can}^\iota,\Z), \text{Frob}_{X_\text{can}^\iota}^*, \text{Amp}(X_{\ol{\IF}_q}), \sigma^* \big) .\]

With these definition, one can prove the following result.

\begin{thm}
    The functor $G$ is fully faithful.
\end{thm}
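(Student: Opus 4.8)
The plan is to bootstrap the statement from Taelman's fully faithfulness theorem for the underlying K3 data, treating it as a black box: it supplies, for ordinary K3 surfaces $X,X'$ over $\IF_q$, a bijection $f \mapsto f^*$ between isomorphisms and isomorphisms of the associated triples $(M,F,K)$. The functor $G$ merely decorates this bijection with the lattice involution $i := \sigma^*$ and packages morphisms as non-ordered pairs, so I expect the proof to be essentially formal. Writing $i=\sigma^*$, $i'=\sigma'^*$ and using $(f\circ\sigma)^* = \sigma^*\circ f^* = i\circ f^*$, one sees that $G$ carries a source arrow $(f,f\circ\sigma)$ to the target arrow $\{f^*, i\circ f^*\}$; I would first record that this is a well-defined arrow of $\underline{\kh\ks}$, the compatibility $i\circ f^* = f^*\circ i'$ being exactly the pullback of $f\circ\sigma = \sigma'\circ f$. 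The two things left to prove are faithfulness and fullness of the induced map on $\Hom$-sets.

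For faithfulness I would argue as follows. Suppose $(f_1,f_1\sigma)$ and $(f_2,f_2\sigma)$ have equal image, i.e.\ $\{f_1^*, i\circ f_1^*\} = \{f_2^*, i\circ f_2^*\}$ as non-ordered pairs. Either $f_1^* = f_2^*$, and Taelman's faithfulness gives $f_1 = f_2$; or $f_1^* = i\circ f_2^* = (f_2\circ\sigma)^*$, and Taelman's faithfulness gives $f_1 = f_2\circ\sigma$. In the second case, using $\sigma^2 = \id$, the source pair $\{f_1, f_1\sigma\}$ equals $\{f_2\sigma, f_2\} = \{f_2, f_2\sigma\}$. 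Either way the two source arrows coincide, which is precisely injectivity on $\Hom$-sets.

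For fullness I would start from an arrow $\{g, i\circ g\}\colon G(X,\sigma)\to G(X',\sigma')$, where by definition $g$ is a Taelman morphism satisfying $i\circ g = g\circ i'$. Taelman's fullness produces a unique K3 isomorphism $h$ with $h^* = g$, and the key step is to check that $h$ automatically respects the Enriques involutions. This should fall out of the compatibility of $g$: one computes $(h\circ\sigma)^* = \sigma^*\circ h^* = i\circ g = g\circ i' = h^*\circ\sigma'^* = (\sigma'\circ h)^*$, and since $h\circ\sigma$ and $\sigma'\circ h$ are both isomorphisms $X\to X'$, Taelman's faithfulness forces $h\circ\sigma = \sigma'\circ h$. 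Hence $(h, h\circ\sigma)$ is a genuine arrow of $\underline{\ke n \kk 3}^\text{ord}$ with $G(h,h\circ\sigma) = \{h^*, i\circ h^*\} = \{g, i\circ g\}$, giving surjectivity on $\Hom$-sets.

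The part I expect to require the most care is this last implication, that $g$ intertwining $i,i'$ forces $h$ to intertwine $\sigma,\sigma'$. It hinges on applying Taelman's faithfulness to a pulled-back identity, which is only legitimate because $h\circ\sigma$ and $\sigma'\circ h$ are again arrows in $\underline{\kk 3}^\text{ord}$; in particular one must know that $i=\sigma^*$ really is an endomorphism of the object $(M,F,K)$ in Taelman's category, so that all the maps above respect $F$ and the ample cone. This is guaranteed by the observation recorded earlier in this section, namely that the lift of the Enriques involution commutes with the Frobenius lift $F_{\kx_\text{can,K}}$. Granting this, the lattice conditions imposed on $i$ in the definition of $\underline{\kh\ks}$ (those forcing $\text{Fix}(i)\cong E_{10}$, etc.) are never used in the argument—they will only be needed for essential surjectivity—and the proof of fully faithfulness is complete.
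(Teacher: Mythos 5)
Your proof is correct and follows essentially the same route as the paper: both reduce full faithfulness of $G$ to the Torelli-type bijection for the underlying K3 data (you cite Taelman's full faithfulness as a black box, the paper invokes Nygaard's effective Hodge isometries, which is the same input) and then handle the non-ordered pairs by the observation that $\Psi = i\circ\Phi$ forces $g = f\circ\sigma$. If anything, your write-up is slightly more careful than the paper's on one point: you explicitly verify, via $(h\circ\sigma)^* = (\sigma'\circ h)^*$ and faithfulness, that the isomorphism produced by fullness intertwines the two Enriques involutions, a step the paper leaves implicit.
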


\begin{proof}
    The functor is clearly well-defined on objects. On morphisms, the condition on the lift of Frobenius to the canonical lift commuting with the Enriques involution implies that it is also well-defined on morphisms just by pull-back. Let $(X,\sigma)$ and $(X',\sigma')$ be two objects in  $\underline{\ke n \kk 3}^\text{ord}$. Then, the map 
    \[ \Hom_{ \underline{\ke n \kk 3}^\text{ord}} \big( (X,\sigma),(X',\sigma') \big) \longra \Hom_{\underline{\kh\ks}} \big( F(X',\sigma'),F(X,\sigma) \big)\]
    is injective, as there are no non-trivial cohomologically trivial automorphisms on K3 surfaces. If $(\Phi, \Psi) \in \Hom_{\underline{\kh\ks}} \big( F(X',\sigma'),F(X,\sigma) \big)$, then they are both effective Hodge isometries (see \cite{nygaard83}), and therefore the Torelli theorem implies that there exist isomorphisms $f,g:X \longra X'$ such that $f^*=\Phi$ and $g^*=\Psi = \sigma^* \circ \Phi$. In particular, $g = f \circ \sigma$ and the above map is also surjective.
\end{proof}

Let us now discuss essential surjectivity of $G$. In order to do this, we need to assume a strong form of potential semi-stable reduction: this is condition $(\star)$ in \cite{taelman17}. 

\begin{prop}
    Assume condition $(\star)$ holds for all K3 surfaces over $K$ that admit an Enriques quotient. Then the functor $G$ above is essentially surjective.
\end{prop}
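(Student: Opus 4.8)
The plan is to deduce essential surjectivity of $G$ from Taelman's essential surjectivity for ordinary K3 surfaces, combined with the lattice-theoretic description of Enriques involutions obtained in Proposition \ref{prop: Enriques} and Remark \ref{rmk: Enriques}. Fix an object $(M,F,K,i)$ of $\underline{\kh\ks}$ and first forget the involution, regarding $(M,F,K)$ as an object of Taelman's target category. The conditions imposed on $i$ (namely $\text{Fix}(i)\cong E_{10}$ primitively embedded in $\NS(M,F)$ with conditions (A) and (B)) guarantee that \emph{any} K3 surface realizing the triple $(M,F,K)$ will admit an Enriques quotient; hence the restricted hypothesis that $(\star)$ holds for all K3 surfaces over $K$ admitting an Enriques quotient is exactly what is needed to invoke Taelman's theorem \cite{taelman17}. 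Applying it, I obtain an ordinary K3 surface $X/\IF_q$ together with an isomorphism of triples $(M,F,K)\cong(\rH^2(X_\text{can}^\iota,\Z),\text{Frob}_{X_\text{can}^\iota}^*,\text{Amp}(X_{\ol{\IF}_q}))$. Transporting $i$ along this isomorphism yields an involution, still denoted $i$, of $\rH^2(X_\text{can}^\iota,\Z)$ commuting with $\text{Frob}^*$ and satisfying all the listed conditions on the canonical lift.

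Next I would realize $i$ geometrically. By the correspondence of Remark \ref{rmk: Enriques}, which rests on Ohashi's work \cite{Oha2007}, the primitive embedding $\text{Fix}(i)\cong E_{10}\hookrightarrow\NS(X_\text{can}^\iota)$ together with conditions (A) and (B) determines a unique Enriques involution $\Sigma_\IC$ on the complex K3 surface $X_\text{can}^\iota$ with $\Sigma_\IC^*=i$. This is precisely the converse direction already used in the proof of Proposition \ref{prop: Enriques}, so no new input is required here beyond checking that the transported $i$ genuinely satisfies the hypotheses of \cite[Prop.~2.2]{Oha2007}.

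It then remains to descend $\Sigma_\IC$ to an Enriques involution on $X$ itself. Here I would follow verbatim the specialization argument in the proof of Proposition \ref{prop: Enriques}, modelled on \cite[Theorem 2.5]{Jang2015}: the involution $\Sigma_\IC$ is defined over a finite extension $L/K$, extends to an involution $\ol{\Sigma}$ of the model $\kx_{\ko_L}$ by the valuative criterion, and specializes by the Matsusaka--Mumford criterion \cite{matsusaka-mumford64} to an involution $\sigma$ of the closed fiber; the fixed-point analysis there (which uses that the characteristic is odd, or the $\mu_2$-structure when $p=2$) shows that $\sigma$ is an Enriques involution. Finally, since $i=\Sigma_\IC^*$ commutes with $\text{Frob}_q^*$ and the action of automorphisms on $\rH^2$ of a K3 surface is faithful, $\sigma$ commutes with the $q$-power Frobenius and is therefore defined over $\IF_q$. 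Thus $(X,\sigma)$ is an object of $\underline{\ke n\kk 3}^\text{ord}$, and by construction of $G$ we have $G(X,\sigma)\cong(M,F,K,i)$.

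The main obstacle is the last step: making the passage from the complex Enriques involution $\Sigma_\IC$ back down to a genuine Enriques involution $\sigma$ on $X/\IF_q$ fully rigorous. One must simultaneously control the absence of fixed points after specialization and verify the Frobenius-compatibility that forces descent from the residue field $\ell$ to the prime field $\IF_q$, so that the resulting $\sigma$ is an honest object-level datum rather than one defined only over a finite extension. All the necessary ingredients are available through Proposition \ref{prop: Enriques} and the Frobenius-commutation observation preceding the definition of $\underline{\kh\ks}$, but it is precisely the point where the several compatibilities (lattice embedding, ampleness, Frobenius, and fixed-point freeness) must be checked together and in a coherent order.
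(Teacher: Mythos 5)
Your argument is correct, but it is not the route the paper's printed proof takes; it is essentially the alternative sketched in the remark immediately following the proposition. The paper's proof is shorter and stays entirely over $\IF_q$: having produced $X/\IF_q$ from the triple $(M,F,K)$ by Taelman's essential surjectivity, it observes that $i$ is itself a morphism of triples $(M,F,K)\to(M,F,K)$ (since $i$ commutes with $F$ and respects $K$), so \emph{full faithfulness} of Taelman's functor realizes $i$ directly as an involution $\sigma$ of $X$ over $\IF_q$ with $\sigma^*=i$; the lattice condition $\mathrm{Fix}(\sigma^*)\cong E_{10}$ then forces $\sigma$ to be an Enriques involution by \cite{Jang2015}. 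This bypasses entirely the two steps that you correctly identify as the delicate part of your approach: the specialization of $\Sigma_\IC$ from the canonical lift back to the closed fiber (with the attendant fixed-point-freeness check, which in Proposition \ref{prop: Enriques} is carried out only for $p\neq 2$), and the Galois descent from the residue field $\ell$ to $\IF_q$ via faithfulness of the action on $\rH^2$. What your route buys in exchange is geometric transparency --- the involution is exhibited concretely on the complex canonical lift via Ohashi's lattice criterion \cite{Oha2007} and then specialized, reusing the machinery of Proposition \ref{prop: Enriques} wholesale --- and it makes visible why the hypotheses on $i$ (primitivity of $\mathrm{Fix}(i)$, conditions (A) and (B)) are exactly what is needed. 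Both arguments rely on the same observation to justify invoking Taelman under the restricted hypothesis on $(\star)$, namely that the lattice data already guarantee the K3 surface over $K$ admits an Enriques quotient; you state this explicitly, which the paper leaves implicit.
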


\begin{proof}
    Let $(M,F,K,i)$ be an object in $\underline{\kh\ks}$. The triplet $(M,F,K)$ corresponds to a K3 surface $X / \IF_q$ by \cite{taelman17}. By definition, $i: (M,F,K) \longra (M,F,K)$ is a morphism of triplets (recall that $i$ commutes with $F$). Therefore, by full faithfulness in \cite{taelman17}, there exists an involution $\sigma: X \longra X$ such that $\sigma^*=i$. Now, $\text{Fix}(\sigma^*)$ is the Enriques lattice $E_{10}$, and thus $\sigma$ is an Enriques involution by \cite{Jang2015}. We conclude that the pair $(X,\sigma)$ maps to the quadruplet $(M,F,K,i)$, and we are done.
\end{proof}
\begin{rmk}
Alternatively we could have observed that our assumptions on $(M, F, i)$ and the work of Ohashi \cite{Oha2007} imply that the canonical lift of $X/\mathbb{F}_q$ is a Enriques- K3 surface with Enriques involution uniquely determined by the embedding of the fixed locus of $i$ in $NS(M,F)$. Then we can use the proof of Proposition \ref{prop: Enriques} to conclude.
\end{rmk}
\bibliographystyle{plain}
\bibliography{bib}{}

\begin{thebibliography}{10}

\bibitem{artin-mazur77}
M.~F. Artin and B.~Mazur.
\newblock Formal groups arising from algebraic varieties.
\newblock {\em Annales scientifiques de l'\'Ecole Normale Sup\'erieure}, Ser.
  4, 10(1):87--131, 1977.

\bibitem{barth-et_al04}
W.~P. Barth, K.~Hulek, C.~A.~M. Peters, and A.~Van~de Ven.
\newblock {\em Compact complex surfaces}, volume~4 of {\em Ergebnisse der
  Mathematik und ihrer Grenzgebiete. 3. Folge. A Series of Modern Surveys in
  Mathematics [Results in Mathematics and Related Areas. 3rd Series. A Series
  of Modern Surveys in Mathematics]}.
\newblock Springer-Verlag, Berlin, second edition, 2004.

\bibitem{cossec-dolgachev89}
F.~R. Cossec and I.~V. Dolgachev.
\newblock {\em Enriques surfaces. {I}}, volume~76 of {\em Progress in
  Mathematics}.
\newblock Birkh\"auser Boston, Inc., Boston, MA, 1989.

\bibitem{Jang2015}
Junmyeong Jang.
\newblock N\'{e}ron-{S}everi group preserving lifting of {K}3 surfaces and
  applications.
\newblock {\em Math. Res. Lett.}, 22(3):789--802, 2015.

\bibitem{kaushal18}
T.~{Kaushal Srivastava}.
\newblock {On Derived Equivalences of K3 Surfaces in Positive Characteristic}.
\newblock {\em arXiv e-prints}, page arXiv:1809.08970, September 2018.

\bibitem{lieblich-maulik11}
M.~{Lieblich} and D.~{Maulik}.
\newblock {A note on the cone conjecture for K3 surfaces in positive
  characteristic}.
\newblock {\em arXiv e-prints}, page arXiv:1102.3377, February 2011.

\bibitem{matsusaka-mumford64}
T.~Matsusaka and D.~Mumford.
\newblock Two fundamental theorems on deformations of polarized varieties.
\newblock {\em Amer. J. Math.}, 86:668--684, 1964.

\bibitem{nygaard83tate}
N.~O. Nygaard.
\newblock The {T}ate conjecture for ordinary {$K3$} surfaces over finite
  fields.
\newblock {\em Invent. Math.}, 74(2):213--237, 1983.

\bibitem{nygaard83}
N.~O. Nygaard.
\newblock The {T}orelli theorem for ordinary {$K3$} surfaces over finite
  fields.
\newblock In {\em Arithmetic and geometry, {V}ol. {I}}, volume~35 of {\em
  Progr. Math.}, pages 267--276. Birkh\"{a}user Boston, Boston, MA, 1983.

\bibitem{Oha2007}
H.~Ohashi.
\newblock On the number of {E}nriques quotients of a {$K3$} surface.
\newblock {\em Publ. Res. Inst. Math. Sci.}, 43(1):181--200, 2007.

\bibitem{stacks-project}
The {Stacks project authors}.
\newblock The stacks project.
\newblock \url{https://stacks.math.columbia.edu}, 2019.

\bibitem{taelman17}
L.~{Taelman}.
\newblock {Ordinary K3 surfaces over a finite field}.
\newblock {\em To appear in J. reine angew. Math.}, 2019.

\end{thebibliography}




\end{document}